\documentclass{amsart}

\usepackage{preamble}

\title{On invariant subrings of Orlik--Solomon and Varchenko--Gel'fand algebras in type A}
\author{Trevor Karn}
\date{\today}

\begin{document}

\maketitle

\begin{abstract}
    We provide simple presentations in terms of generators and relations for the invariant subring of both the Orlik--Solomon algebra and Varchenko--Gel'fand ring of the type $A_n$ reflection arrangement acted upon by the type $A_{n-1}$ reflection group. 
    This may be interpreted as a presentation for the cohomology of the ``mixed configuration space" of $n$ red points and one blue point.
    We provide increasingly refined descriptions of the invariant ring starting with the total dimension and ending with the simple presentation in terms of generators and relations.
\end{abstract}

\tableofcontents

\section{Introduction}

The primary goal of this paper is to provide a simple presentation for a certain ring in terms of its generators and relations.
For any ring $R$ carrying the action of a group $G$, the $G$-invariant subring of $R$ is
\begin{align*}
R^G := \{x \in R : g \cdot x = x \; \forall g \in G\}.
\end{align*}
The two fundamental questions of invariant theory are (i) determine a generating set for $R^G$ and (ii) determine relations among those generators.
Our main results, \Cref{thm:VGnPresentation,thm:OSnPresentation}, answer these two fundamental questions for specific choices of $R$ and $G$ that arise in the study of hyperplane arrangements.

Classical invariant theory focuses on the case where $R$ is a polynomial ring.
The most well studied groups which act on the polynomial ring are reductive algebraic groups and reflection groups.
Theorems of Chevalley and Shephard--Todd (see \cite[Chapter~3]{humphreys}) give a particularly elegant answer to these questions when $G$ is a reflection group, such as the symmetric group $\symm_n$.
Whereas classical invariant theory starts with a group acting on a polynomial ring, we will instead start with a group acting on the Orlik--Solomon algebra $\OS(\A)$ and the Varchenko--Gel'fand ring $\VG(\A)$.
For the particular arrangement $\A_n$ that is the focus of this paper, $\OS(\A_n)$ and $\VG(\A_n)$ are defined below.

When a hyperplane arrangement $\A$ has the symmetry of a  group $G$, both $\VG(\A)$ and $\OS(\A)$ ``see" this symmetry and thus carry $G$ representations.
The prototypical example of a hyperplane arrangement with the symmetry of a finite group is the rank-$n$ braid arrangement 
\[\A_{n} := \{h_{ij} : 1 \leq i < j \leq n+1\},\] where $h_{ij} \subseteq \C^{n+1}$ is the hyperplane defined by $z_i = z_j$.
This is the reflection arrangement associated to the symmetric group $\symm_{n+1}$, which is the type $A_n$ Coxeter group.
The symmetric group $\symm_{n+1}$ acts on $\A_n$ by permuting the coordinates of $\C^{n+1}$.
This action restricts to $\symm_n$ by fixing the last coordinate and only permuting the first $n$ coordinates.

The two main results of this paper are presentations, in terms of generators and relations, for the rings $\OS(\A_n)^{\symm_n}$ and $\VG(\A_n)^{\symm_n}$.
We write $\VG_n := \VG(\A_n)$ and $\OS_n := \OS(\A_n)$.
We will also write the invariants $\invVG := \VG_n^{\symm_n}$ and $\invOS := \OS_n^{\symm_n}.$
Our first theorem answers the two fundamental questions of invariant theory for $\invVG$.

\begin{theorem} \label{thm:VGnPresentation}
    For $n \geq 2$, $\invVG \cong \Q[z]/\langle z^{n+1}\rangle$.
\end{theorem}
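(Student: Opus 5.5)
The plan is to exhibit an explicit invariant $z$ of degree one, show that its powers $1,z,\dots,z^n$ are nonzero, and then show by a dimension count that they span $\invVG$ while $z^{n+1}=0$. I will work with the graded presentation of $\VG_n$. It has generators $e_{ij}$ for the hyperplanes $h_{ij}$, with $e_{ij}^2=0$ and sign conventions compatible with the $\symm_{n+1}$-action, together with the three-term relations for each triple $i,j,k$. A monomial basis is given by no-broken-circuit (nbc) monomials, and the top nonzero degree is $n=\operatorname{rank}\A_n$. I also use the fact, due to Varchenko--Gel'fand, that $\VG_n$ is the associated graded of the filtered ring of $\Q$-valued functions on the chambers of $\A_n$. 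This filtration is by degree in the Heaviside functions, and it is $\symm_{n+1}$-equivariant.

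\emph{Step 1: the total dimension.} The chambers of $\A_n$ are the total orderings of the coordinates $z_1,\dots,z_{n+1}$, and $\symm_n$ acts on them by permuting the labels $1,\dots,n$. Two orderings lie in the same $\symm_n$-orbit exactly when $z_{n+1}$ occupies the same position, so there are $n+1$ orbits. The $\symm_n$-invariant functions on chambers therefore form a space of dimension $n+1$. Because we are in characteristic zero, taking invariants is exact and commutes with passing to the associated graded of an equivariant filtration. Hence $\dim_\Q \invVG = n+1$.

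\emph{Step 2: the generator and its powers.} Set $z:=\sum_{i=1}^n e_{i,n+1}$, oriented so that each summand is sent to another summand by every $\sigma\in\symm_n$; then $z$ is $\symm_n$-invariant. Since the $e_{i,n+1}$ commute and square to zero, expanding gives
\[
z^d = d!\sum_{1\le i_1<\dots<i_d\le n} e_{i_1,n+1}\cdots e_{i_d,n+1}.
\]
Choose a hyperplane order in which the $h_{i,n+1}$ come first. Any circuit of $\A_n$ uses at least two edges not incident to vertex $n+1$, or else is a triangle $\{h_{i,n+1},h_{j,n+1},h_{ij}\}$. In either case its minimal element is some $h_{i,n+1}$, so each broken circuit contains an edge not incident to $n+1$. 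Consequently every product of distinct $e_{i,n+1}$ is an nbc monomial; geometrically, these edges form a star and so contain no cycle. The expansion of $z^d$ is therefore a nonzero combination of basis elements, so $z^d\neq 0$ for $0\le d\le n$. On the other hand, $z^{n+1}=0$ because $\VG_n$ vanishes above degree $n$.

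\emph{Step 3: conclusion.} The elements $1,z,\dots,z^n$ are nonzero invariants lying in distinct degrees, so they are linearly independent. By Step~1 they form a basis of $\invVG$. It follows that $\invVG$ is generated by $z$, and the map $\Q[z]\to\invVG$ is surjective with kernel containing $z^{n+1}$. Comparing dimensions gives $\invVG\cong\Q[z]/\langle z^{n+1}\rangle$.

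The step I expect to need the most care is Step~1. There I must make sure that the convention for $\VG_n$ used in the paper really is the associated graded of the function ring on chambers, and that the $\symm_n$-action matches the permutation action on chambers, including any signs on the $e_{ij}$. If instead the paper uses the ungraded Heaviside presentation ($x_{ij}^2=x_{ij}$), the invariant ring is $\Q^{n+1}$ and not local, so the graded convention is essential. A fallback for Step~1 is a direct averaging argument: use the Reynolds operator on the nbc basis to compute the graded character of $\VG_n$ restricted to $\symm_n$, and check that the trivial representation appears exactly once in each degree.
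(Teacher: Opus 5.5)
Your proof is correct. Its skeleton matches the paper's: total dimension $n+1$, nonvanishing of $z^d$ via NBC monomials, then a dimension comparison. The genuine difference is how you get $\dim \invVG = n+1$.

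\textbf{Step 1.} The paper imports Moseley's theorem that $\VG_n$ is the regular $\symm_{n+1}$-representation, so $\Frob \VG_n = h_1^{n+1}$. It then evaluates $\langle h_1^{n+1}, h_n h_1\rangle = n+1$ by iterating the Pieri rule. You instead use three ingredients: the Varchenko--Gel'fand filtration on $\Q$-valued functions on chambers, exactness of invariants in characteristic zero, and a count of $\symm_n$-orbits of total orders (the position of the blue point).

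Your worry about conventions resolves in your favor. The degree-one images of the Heaviside functions satisfy $x_{ij}+x_{ji}=0$, because $x^+_{ij}+x^+_{ji}=1$, and they square to zero. For a triple, the cyclic Heaviside functions $A,B,C$ satisfy $AB+BC+CA=A+B+C-1$, which yields Cohen's three-term relation in the associated graded. So Cohen's ring surjects $\symm_{n+1}$-equivariantly onto the associated graded, and both have dimension $(n+1)!$.

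Your route is more elementary and explains the number $n+1$ geometrically. It also gives $\dim \VG_n^G$ as the number of $G$-orbits of chambers for any $G\le\symm_{n+1}$. However, it rests on essentially the same input as Moseley's result. The paper's symmetric-function route has the advantage of being the same machinery it reuses for $\invOS$.

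\textbf{Step 2.} Two smaller remarks. First, with the paper's lexicographic order your monomials $e_{i,n+1}e_{j,n+1}$ are not NBC, since both are fingers of the single hand $H_{n+1}$. So your reordering of the hyperplanes is genuinely needed. The paper sidesteps this by letting $\symm_n$ fix $1$ and taking $z=\sum_{i\ge 2}x_{1,i}$.

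Second, your sentence ``in either case its minimal element is some $h_{i,n+1}$'' is false for circuits avoiding vertex $n+1$. The conclusion still holds. Such a circuit has at least three edges, none incident to $n+1$, so deleting its minimum still leaves such an edge. Only the triangle case needs the minimum to be some $h_{i,n+1}$.
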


$\invOS$ is slightly more subtle. 
Consider the free associative algebra $\Q\langle A \rangle$ on an alphabet $A$.
Endow the alphabet $A$ with a degree function $|\cdot |: A \to \Z$.
The \textit{free graded-commutative algebra} $\Q\{A\}$ is the quotient of $\Q\langle A \rangle$ by the two-sided ideal $\langle a b - (-1)^{|a||b|} ba : a,b \in A\rangle$.

\begin{example}
    Let $A = \{\alpha, \mu, \gamma\}$ with degree function $|\alpha| = |\mu| = 1$ and $|\gamma| = 2$.
    Then in $\Q \{A\}$, the following identities hold:
    \begin{itemize}
        \item $\alpha^2 = \mu^2 = 0$,
        \item $\gamma^d \neq 0$ for all $d \geq 0$,
        \item $\alpha \mu = -\mu\alpha$,
        \item $\alpha \gamma = \gamma \alpha$ and $\mu \gamma = \gamma \mu$.
    \end{itemize}
    $\Q\{\alpha, \mu, \gamma\}$ can be described as the tensor product of an exterior algebra with a univariate polynomial ring, that is $\Q\{\alpha, \mu, \gamma\}\cong \wedge\{\alpha, \mu\} \,\otimes\, \Q[\gamma]$.
    The isomorphism is one of \textit{graded} algebras when the polynomial generator has degree $2$.
    The monomials of the form $\alpha^\epsilon \mu^{\delta} \gamma^d$ are a $\Q$-basis for $\Q\{A\}$ when $\epsilon,\delta \in \{0,1\}$ and $d \in \Z_{\geq 0}$.
\end{example}

\begin{definition}\label{def:Ideal}
    Let $p = \lfloor \frac{n+1}{2} \rfloor$ and define the following two-sided ideal of $\Q\{\alpha, \mu, \gamma\}$:
    \[I_n = 
    \begin{cases}
        \langle \gamma^p \rangle &\text{if $n$ is even}\\
        \langle \gamma^p\;,\; \alpha \gamma^{p-1} - (p-1) \mu \gamma^{p-1}\rangle &\text{if $n$ is odd}
    \end{cases} \]
\end{definition}

Our second main theorem, whose proof makes up the bulk of this paper, answers the two fundamental questions of invariant theory for $\invOS$.

\begin{theorem} \label{thm:OSnPresentation}
    For $n \geq 2$,
    $\invOS \cong \Q\{\alpha,\mu,\gamma\}/I_n$.
\end{theorem}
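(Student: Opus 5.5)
The plan is to identify explicit invariant generators in $\invOS$, check that they satisfy the relations of $I_n$, and then compare dimensions degree by degree.

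\textbf{Dimension count.} By the Orlik--Solomon/Arnold description, $\OS_n \cong H^*(\mathrm{Conf}_{n+1}(\C);\Q)$. Invariants under $\symm_n$ are then the cohomology of the quotient $\mathrm{Conf}_{n+1}(\C)/\symm_n$, the configuration space of $n$ unlabeled red points and one blue point. Translate so that the blue point sits at the origin. This gives
\[
\mathrm{Conf}_{n+1}(\C)/\symm_n \simeq \C \times \mathrm{UConf}_n(\C^\times),
\]
so $\invOS \cong H^*(\mathrm{UConf}_n(\C^\times);\Q)$. I would compute the Poincaré polynomial via Lehrer--Solomon/transfer, or more concretely via the fibration that forgets points. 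The target is
\[
\dim(\invOS)_k = \begin{cases} 1 & k=0,\\ 2 & 1\le k\le 2p-2 \text{ (roughly)},\end{cases}
\]
with a top-degree correction depending on the parity of $n$. On the algebra side, the quotient $\Q\{\alpha,\mu,\gamma\}/I_n$ has basis $\alpha^\epsilon\mu^\delta\gamma^d$ with $d<p$. For $n$ odd, one extra relation in degree $2p-1$ kills one dimension, and the relation multiplied by $\alpha$ or $\mu$ then also kills $\alpha\mu\gamma^{p-1}$. I would compute the Hilbert series of both sides and check that they agree, using the Euler characteristic as a sanity check.

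\textbf{Explicit generators.} Write $e_{ij}$ for the OS generators. Set
\[
\alpha = \sum_{i\le n} e_{i,n+1}, \qquad \mu = \sum_{i<j\le n} e_{ij},
\]
which are the degree-one orbit sums. For $\gamma$, take the orbit sum of a degree-two class, for example $\sum e_{i,n+1}e_{j,n+1}$ or $\sum e_{ij}e_{j,n+1}$. The choice should be made so that $\gamma$ corresponds, under the isomorphism above, to a natural class pulled back from $\mathrm{UConf}_n(\C)$ or from the $\C^\times$ factor.

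\textbf{Relations.} Next I would verify in $\OS_n$ that $\alpha^2=\mu^2=0$; this is automatic for odd-degree elements. The key computations are $\gamma^p=0$ and, for $n$ odd, $\alpha\gamma^{p-1}=(p-1)\mu\gamma^{p-1}$. I would prove these using the nbc-basis and induction on $n$, via the fiber-type decomposition
\[
\OS_n \cong \OS_{n-1}\otimes \langle 1, e_{1,n+1},\dots,e_{n,n+1}\rangle.
\]
Alternatively, the relations follow topologically from the known cohomology of $\mathrm{UConf}_n(\C^\times)$: the rational cohomology of braid-type groups vanishes above a certain degree.

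\textbf{Surjectivity.} Since the Hilbert series agree, it then suffices to show the induced map $\Q\{\alpha,\mu,\gamma\}/I_n \to \invOS$ is surjective, or equivalently injective. To do this I would show that the images of $\alpha^\epsilon\mu^\delta\gamma^d$ are linearly independent. The method is to expand each in the nbc basis and locate a leading monomial with respect to a suitable term order, which gives triangularity.

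I expect this independence step to be the main obstacle. This is especially true near the top degree for $n$ odd, where exactly one linear dependence must appear, with the specific coefficient $p-1$. A leading-term argument must be delicate enough to detect it. My first attempt would be to restrict to $\symm_n$-orbit sums of nbc monomials and track coefficients explicitly.
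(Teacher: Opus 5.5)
Your plan has a genuine gap at the relations, and that is exactly where the paper's key idea is needed.

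\textbf{Choosing $\gamma$.} You never fix $\gamma$, and the heuristic you offer for it is empty. We have $H^2(\C^\times)=0$ and, by Arnold, $H^2(\mathrm{UConf}_n(\C);\Q)=0$, so every class pulled back from either space vanishes in degree $2$. Neither of your two candidates works as it stands. Each term $e_{i,n+1}e_{j,n+1}$ is negated by the transposition $(ij)$, so its $\symm_n$-orbit sum is $0$. The orbit sum of $e_{ij}e_{j,n+1}$ is a multiple of the paper's $c=\sum_{i<j\le n}e_{ij}(e_{i,n+1}+e_{j,n+1})$.

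\textbf{Proving the relations.} Your topological fallback does not give the relations either. Vanishing above degree $n$ only yields $\gamma^p=0$ for odd $n$. For even $n$ the class $\gamma^p$, and for odd $n$ the class $\alpha\gamma^{p-1}-(p-1)\mu\gamma^{p-1}$, both lie in degree $n$, where $(\invOS)_n\cong\Q$. The missing tool is the Orlik--Solomon boundary $\partial$, the derivation with $\partial e_{ij}=1$. Since $(\OS_n,\partial)$ is acyclic and $\OS_{n,n+1}=0$, $\partial$ is injective on $\OS_{n,n}$, so top-degree identities can be checked after applying $\partial$. With $a=\sum_{i<j\le n}e_{ij}$ and $m=\sum_{i\le n}e_{i,n+1}$ one has $\partial a=\binom n2$, $\partial m=n$ and $\partial c=(n-1)m-2a$.
\begin{itemize}
\item For even $n$ this gives $\partial(am-pc)=0$, which is what singles out $g=am-pc$. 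Then $\partial(g^p)=pg^{p-1}\partial g=0$, so $g^p=0$. Since $(am)^2=0$, this says $c^p=amc^{p-1}$. Because $am\gamma^{p-1}$ has to span the top degree, $amc^{p-1}\neq0$. Writing $\gamma=x\,am+y\,c$ gives $\gamma^p=y^{p-1}(y+px)\,amc^{p-1}$, so $\gamma^p=0$ forces $\gamma\in\Q g$. In particular $\gamma=c$ fails.
\item For odd $n$, $\gamma=c$ is fine, since $\gamma^p$ has degree $n+1$. Here the same method gives $\partial(ag^{p-1})=(p-1)\,\partial(mg^{p-1})$, hence $ag^{p-1}=(p-1)mg^{p-1}$.
\end{itemize}

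\textbf{Labeling and the coefficient.} Your labeling is reversed relative to the paper's $\alpha\mapsto a$, $\mu\mapsto m$. The relation that actually holds is $a\gamma^{p-1}=(p-1)m\gamma^{p-1}$. Combined with it, the relation you propose to verify would give $\bigl((p-1)^2-1\bigr)m\gamma^{p-1}=0$. That is false for odd $n\ge5$, because $m\gamma^{p-1}$ must span the top degree. This is cosmetic, and so is your worry about the coefficient $p-1$. The automorphism $\alpha\mapsto\alpha+(p-1)\mu$ carries $I_n$ to $\langle\gamma^p,\alpha\gamma^{p-1}\rangle$. So any nonzero relation in the one-dimensional top degree yields an isomorphic algebra.

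\textbf{Linear independence.} The step you flag as the main obstacle really is the hard one, and it remains open in your proposal. Injectivity of $\partial$ cannot supply it. Taking invariants is exact, so the invariant subcomplex, of dimensions $1,2,\dots,2,1$, is exact. Hence $\partial$ has rank one on every two-dimensional invariant degree. For instance, $\partial(amc^d)=\tfrac n2\,c^d\partial c$ and $\partial(c^{d+1})=(d+1)\,c^d\partial c$. The paper's \Cref{lemma:ConsecutiveDegreeLI} obtains a nonsingular $2\times2$ system only by writing $\partial(am)=m-a$ instead of its own value $-na+\binom n2m$. With the correct value the system is singular, as $\partial g=0$ for even $n$ already shows. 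So a genuine nonvanishing computation is needed, and your nbc/fiber-type expansion is a reasonable tool for it. For even $n$ it suffices to prove $amg^{p-1}\neq0$. Multiplying a putative relation in any degree by a suitable $ag^j$, $mg^j$, $amg^j$ or $g^j$ lands in the top degree and kills each coefficient. The Hilbert series then gives spanning.

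\textbf{Hilbert series.} Your Hilbert-series step is only sketched. The paper obtains it from Frobenius reciprocity and Sundaram's multiplicities. Note that the dimension is $2$ in every degree $1\le k\le n-1$ for both parities, not only up to $2p-2$.
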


Graded-commutative algebras often arise naturally in topology, especially as cohomology rings. 
As we will explain below, $\invOS$ and $\invVG$ have interpretations as cohomology rings.
Thus, one should not be surprised that $\invOS$ may be presented as a quotient of a free graded-commutative algebra.
Orlik and Solomon \cite{orlik-solomon} showed that the topology of the complement of a hyperplane arrangement and the combinatorics of hyperplane intersections are deeply related.

In the case of $M_2(\A_n) := \C^{n+1} - \A_n$, the complement of the arrangement consists of points in $\C^{n+1}$ where no two coordinates are the same. 
Regarding each complex coordinate $z_i$ as a point in the complex plane $\C \cong \R^2$, this is the same as saying that $M_2(\A_n)$ is the \textit{ordered configuration space of points in the plane}.
The fundamental group of $M_2(\A_n)$ is well known to be the braid group, giving the braid arrangement its name.
A related topological space is the \textit{unordered configuration space of indistinguishable points in the plane}.
Cohen \cite[Part~3]{cohen-lada-may} described the homology (and the corresponding induced $\symm_{n+1}$ action) of the unordered configuration space as a Gerstenhaber algebra.
The unordered configuration space is formed by identifying points in the in the same orbit under the natural $\symm_{n+1}$ action on the ordered configuration space $M_2(\A_n)$; in other words, the unordered configuration space is the orbit space $M_2(\A_n)/\symm_{n+1}$.

There is also a \textit{mixed configuration space} that lives between the ordered and unordered configuration spaces.
A further configuration space to consider is that of $n+1$ points in the plane where the first $n$ points are indistinguishable from each other and the last point is distinguishable from any of the first $n$ points. 
To simplify this we will call the first $n$ points ``red points" and the last point a ``blue point." Thus, there is a mixed configuration space consisting of configurations of $n$ red points and one blue point.
As an orbit space, it is $M_2(\A_n)/(\symm_{n} \times \symm_1)$.
The study of orbit spaces naturally connects algebraic topology to invariant theory.
The connection comes through the isomorphism $H^\bullet(X/G;\Q) \cong H^\bullet(X;\Q)^G$ when $G$ acts freely on $X$ (see for example \cite[Section~3.G]{hatcher} for further details).
Thus, one may regard \Cref{thm:OSnPresentation} as a presentation for the cohomology of the configuration space of $n$ red points and $1$ blue point.

In the past decade, attention has turned to look at the relationship between subspace arrangement complements in Euclidean spaces and combinatorics.
This has led to study of the Varchenko--Gel'fand ring $\VG(\A)$.
When a subspace arrangement in $\R^{d n}$ comes as the thickening $\A \otimes \R^d$ of a real hyperplane arrangement $\A \subseteq \R^n$, Moseley showed that (up to a linear shift of grading) the cohomology of a subspace arrangement complement is either an Orlik--Solomon algebra or a Varchenko--Gel'fand ring.
Moseley's work inspires the following philosophy: given a nice result about either the Orlik--Solomon algebra or the Varchenko--Gel'fand ring, one hopes there is a similar nice result for the other.
This philosophy was the motivation for investigating \Cref{thm:VGnPresentation}.

As mentioned above, the complement of the braid arrangement is the unordered configuration space of points in the plane.
But it is also just one example of a reflection group acting freely on the complement of reflecting hyperplanes.
More generally, reflection groups $W$ provide a rich collection of examples where a group acts freely on the complement of a hyperplane arrangement.
Douglass, Pfeiffer, and R\"ohrle \cite{douglass-pfeiffer-roehrle-20,douglass-pfeiffer-roehrle-25} have systematically studied the corresponding $W$-representations of $\OS(\A_W)$.
With an eye towards invariant theory, one is first led to wonder about the structure of the invariant rings $\OS(\A_W)^W$ and $\VG(\A_W)^W$.
One of the important classes of subgroups of reflection groups are the parabolic subgroups $P \leq W$.
In the context of our main results, one wonders if there is a general story to be told about the invariant rings $\OS(\A_W)^P$ and $\VG(\A_W)^P$.
Further investigation of this story is left for future work.

The remainder of this paper is organized as follows.
\Cref{sec:Background} describes some of the background which we will draw upon to prove \Cref{thm:VGnPresentation,thm:OSnPresentation}.
\Cref{sec:VGInvariants} is dedicated to the proof of \Cref{thm:VGnPresentation}.
\Cref{sec:HilbOS} focuses on computing the Hilbert--Poincar\'e polynomial of $\invOS$, which is the content of \Cref{prop:OSinvariantHilb}. 
\Cref{sec:LinearBasis} describes two linear bases for $\invOS$ and focuses on showing the linear independence of the bases; spanning will follow from \Cref{prop:OSinvariantHilb}.
Lastly, \Cref{sec:RingStructure} will draw upon the linear basis to prove \Cref{thm:OSnPresentation}.

\section{Background}\label{sec:Background}

There is a famous linear basis for $\OS(\A)$ of any arrangement known as the NBC basis.
\begin{definition}
    Fix an order on the hyperplanes $h_1 < h_2 < \cdots < h_k$.
    A \textit{circuit} is a set of hyperplanes $C = \{h_i\}$ such that $\codim \cap_{h \in C} h < |C|$ and $\codim \cap_{h \in C-h'} h = |C-h'|$ for any $h' \in C$.
    In other words, a circuit is a minimally dependent set of hyperplanes.
    A set $S$ is called a \textit{broken circuit set} if there is a hyperplane $h \in \A$ such that $ \{h\} \cup S$ is a circuit and $h < h_i$ for all $h_i \in S$.
    A \textit{no broken circuit (NBC) set} is a set which does not contain any broken circuit as a subset.
    Generators of $\OS(\A)$ (respectively $\VG(\A)$) are indexed by hyperplanes, and so the image of a monomial in $\OS(\A)$ (respectively $\VG(\A)$) corresponds to a set of hyperplanes. 
    If the corresponding index set of hyperplanes is an NBC set, we say that the monomial is an \textit{NBC monomial}.
\end{definition}

\begin{remark}
    We will often abuse notation and refer to monomials in $\OS(\A)$ or $\VG(\A)$. 
    This is taken to mean the images of monomials in $\OS(\A)$ and $\VG(\A)$, which are defined as quotients. 
\end{remark}

For any fixed order on the hyperplanes of $\A$, the NBC monomials form a basis for $\OS(\A)$ and $\VG(\A)$. Indeed as graded vector spaces we have $\OS(\A) \cong \VG(\A)$, but the isomorphism is emphatically not one of rings or of representations.
In the special case of the braid arrangement $\A_n$, Barcelo \cite[{Theorem~5.1}]{barcelo} described an elegant combinatorial model for the NBC sets.

\begin{definition}\label{def:HandsFingers}
    Define a hand $H_i$ to be the labeled complete bipartite graph $K_{1,i-1}$ where the vertex in the first connected component is labeled by $i$ and the rest are labeled by $1,2,\ldots,i-1$.
    The vertex $i$ is called the \textit{wrist} and the edges $(i,j)$ are called the \textit{fingers}.
    A subset of the disjoint union $\sqcup_{i=2}^n H_i$ is called an $n$-\textit{handful}.
\end{definition}

Each hyperplane $h_{ij} \in \A_n$ corresponds to both the edge $(i,j)$ in the hand $H_i$ and a generator $e_{ij}$ of $\OS_n$.
Thus, $(n+1)$-handfuls are in bijection with monomials in $\OS_n$ and $\VG_n$.
Order the hyperplanes $h_{ij} \in \A_n$ by the lexicographic order on the subscripts $h_{ij}$.
Then a monomial in $\OS(\A_n)$ or $\VG(\A_n)$ is an NBC monomial if and only if the corresponding handful consists of at most one finger from each hand.
The fact that there are $i-1$ ways to choose at most one finger from the hand $H_i$ provides a quick combinatorial proof of the well-known fact that $\dim \OS(\A_n) = \dim \VG(\A_n) = (n+1)!$.

\begin{example}
    Consider $H_2 \sqcup H_3 \sqcup H_4$, drawn below:
    \[\hand{2}{1} \hand{3}{1,2} \hand{4}{1,2,3}\]
    Thus, the monomials $e_{12}e_{23}e_{24}$ and $x_{12}x_{23}x_{24}$ (in $\OS(\A_3)$ and $\VG(\A_3)$ respectively) are NBC monomials and part of the NBC bases.
    Conversely, $e_{14}e_{34}$ is not an NBC monomial.
    Indeed, $\{h_{13} < h_{14} < h_{34}\}$ is a circuit.
\end{example}

The last piece of structure we wish to describe for the Orlik--Solomon algebra is that of an acyclic chain complex.
What we say below holds in greater generality than we state. For the full story see for example \cite[Lemma~3.13]{orlik-terao}.
Define the map 
\[ \hat\partial: \wedge \{e_{ij}\} \to \wedge \{e_{ij}\}\]
on degree-$d$ exterior monomials by linearly extending the alternating sum of all degree-$d-1$ monomials dividing the original one.
Explicitly,
\begin{equation}\label{def:DefnOfPartial}
\hat \partial(e_{i_1j_1}e_{i_2j_2}\cdots e_{i_dj_d})
=
\sum_{k=1}^d (-1)^{k+1} e_{i_1j_1}e_{i_2j_2}\cdots \hat{e}_{i_k j_k}\cdots e_{i_dj_d}
\end{equation}
where $\hat{e}_{i_k j_k}$ means to omit only the factor of ${e}_{i_k j_k}$ in the product.
The map $\hat\partial$ descends to a map 
$\partial:\OS_n \to \OS_n$.
Moreover,
\begin{lemma}[{\cite[{Lemma~3.13(c)}]{orlik-terao}}]
    \label{lem:AcyclicPartial}
$\partial$ determines the acyclic chain complex
\[ \begin{tikzcd}
(\OS_n)_0 \cong \Q & 
\arrow[l,"\partial"'] (\OS_n)_1 & 
\arrow[l,"\partial"'] (\OS_n)_2 & 
\arrow[l,"\partial"'] \cdots &
\arrow[l,"\partial"'] (\OS_n)_n &
\arrow[l,"\partial"'] 0.
\end{tikzcd}
\]
\end{lemma}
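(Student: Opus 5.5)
The statement has two parts: $\partial$ descends to $\OS_n$, and the resulting complex is exact. I would prove both on the exterior algebra $E := \wedge\{e_{ij}\}$ and then pass to the quotient $\OS_n = E/I$. Here $I$ is the Orlik--Solomon ideal, generated by the elements $\hat\partial(e_S)$ with $S$ a dependent set of hyperplanes.

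\textbf{Step 1: properties of $\hat\partial$ on $E$.} First, $\hat\partial$ is the unique degree $-1$ linear map with $\hat\partial(e_{ij})=1$ that is a graded derivation:
\[\hat\partial(xy)=\hat\partial(x)\,y+(-1)^{|x|}x\,\hat\partial(y).\]
This follows directly from the sign convention in \eqref{def:DefnOfPartial}. Second, $\hat\partial^2=0$. On a monomial, each term of $\hat\partial^2$ omits a pair of factors $k<l$ and appears twice. The two occurrences carry signs $(-1)^{k+1}(-1)^{l}$ and $(-1)^{l+1}(-1)^{k+1}$, which are opposite, so they cancel.

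\textbf{Step 2: $\partial$ descends to $\OS_n$.} Because $\hat\partial$ is a derivation, it suffices to check that $\hat\partial$ maps the generators of $I$ into $I$. Any element of the form $x\,\hat\partial(e_S)\,y$ then has derivative in $I$: expanding by the Leibniz rule gives terms that are multiples of $\hat\partial(e_S)$, together with the term $\pm x\,\hat\partial^2(e_S)\,y$, which vanishes. For the generators themselves, $\hat\partial(\hat\partial(e_S))=0\in I$ by Step 1. Hence $\hat\partial(I)\subseteq I$, and $\partial$ is well defined with $\partial^2=0$. The grading runs from $0$ to $n$ because $\OS_n$ vanishes above the rank $n$ of $\A_n$.

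\textbf{Step 3: exactness via a contracting homotopy.} Fix one hyperplane, say $e:=e_{12}$, and let $L_e$ denote left multiplication by $e$ on $\OS_n$. Using the derivation rule,
\[\partial(e\,x)=\partial(e)\,x-e\,\partial(x)=x-e\,\partial(x),\]
so
\[\partial L_e+L_e\partial=\mathrm{id}.\]
In degrees $\ge 1$ this gives exactness directly: if $\partial x=0$, then $x=\partial(e x)$. At the left end, $\partial\colon(\OS_n)_1\to(\OS_n)_0\cong\Q$ is surjective, since $\partial(e_{12})=1$. Together these show the complex is acyclic.

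\textbf{Main obstacle.} The only delicate point is Step 2: confirming that the Orlik--Solomon ideal is exactly the ideal generated by $\hat\partial$ of dependent sets, and that it is stable under $\hat\partial$. Once the derivation property and $\hat\partial^2=0$ are established on $E$, the rest is formal. I would therefore take care with the sign conventions of the derivation identity, which everything else depends on.
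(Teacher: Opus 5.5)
Your proof is correct and is the standard argument behind the cited result; the paper gives no proof of its own and only cites Orlik--Terao, Lemma~3.13(c). In that argument, $\partial$ preserves the ideal because the ideal is generated by $\hat\partial$-images and $\hat\partial^2=0$, and left multiplication by a fixed generator is a contracting homotopy since $\partial(e_{12}x)=x-e_{12}\partial(x)$, which also gives injectivity in top degree and surjectivity onto degree $0$. The point you flagged as delicate is harmless: in the presentation \eqref{eqn:OSPresentation} the degree-one generators $e_{ij}-e_{ji}$ are killed by $\hat\partial$, and the Arnol'd relations are exactly $\hat\partial(e_{ij}e_{ik}e_{jk})$, so your Step~2 applies verbatim.
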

In particular this means that $\partial:(\OS_n)_n \to (\OS_n)_{n-1}$ is injective.
Furthermore $\partial$ is a differential in the sense that $\partial(ab) = \partial(a) b + (-1)^{|a|} a \partial(b)$ where $|a|$ is the degree of $a$.
In particular this implies the power rule
\begin{equation}\label{eqn:PartialPowerRule}
    \partial(a^d) = d a^{d-1} \partial(a).
\end{equation}

We now turn our attention to the $\symm_n$-invariants.
In order to prove \Cref{thm:VGnPresentation,thm:OSnPresentation}, we will provide increasingly refined descriptions of $\invOS$ and $\invVG$.
The coarsest description of $\invOS$ and $\invVG$ is their dimension as vector spaces.
The main tool we will use to understand these dimensions is \textit{Frobenius reciprocity}.
Recall that we are focused on studying $\symm_n$-representations obtained by restricting $\symm_{n+1}$-representations.
Accordingly, we will provide the shortest (rather than the most general) path to understand $\dim \invOS$ and $\dim \invVG$.
Experts will be well aware of how much more generally Frobenius reciprocity may be stated. 

The irreducible representations of $\symm_n$ are in bijection with partitions $\lambda \vdash n$.
The Frobenius characteristic map $\Frob$ associates to an irreducible representation $V^\lambda$ the Schur function
\[s_\lambda = \sum_{\SSYT(\lambda)} \x^{\wt T}\]
where $\SSYT(\lambda)$ is the set of all semistandard tableaux of shape $\lambda$ and $\wt T$ is the monomial $\prod_i x_i^{m_i}$ where $m_i$ is the multiplicity of the integer $i$ in a given filling of the tableau.
This association is an isometry of inner product spaces between the representation ring, for which irreducible representations form an orthonormal basis, and the ring of symmetric functions, for which $\{s_\lambda\}_{\lambda\vdash n}$ is an orthonormal basis. 
For full details, see \cite{sagan}, \cite[Chapter~7]{ec2}, and \cite{macdonald}, in that order.
We will write the inner product as $\langle -, -\rangle$.

For a representation $V$ obtained by restricting a $\symm_{n+1}$ representation to $\symm_n$, the multiplicity of $V^\lambda$ is $\langle \Frob V, s_\lambda h_1 \rangle.$
Let $V^{\symm_n}$ denote the trivial isotypic component of the representation $V$.
The trivial representation corresponds to the symmetric function $h_n = s_n$, and so to find the multiplicity of the trivial representation inside of $V$, we wish to compute
\begin{equation*}
\langle \Frob V, h_n h_1 \rangle = \dim V^{\symm_n}.
\end{equation*}

In order to use Frobenius reciprocity as described above to compute the dimension of $V^{\symm_n}$, we need to know $\Frob V$.
Felder and Veselov \cite[p.~103]{felder-veselov} determined a description of the representation of a Coxeter group on the Orlik--Solomon algebra of the corresponding hyperplane arrangement. 
As a special case of their more general result,\footnote{This special case was first understood by Lehrer and Solomon in \cite{lehrer-solomon}.} one obtains isomorphism of $\symm_{n+1}$-representations
\[\OS_n \cong \left( \trivrep \uparrow_{\langle (12) \rangle}^{\symm_{n+1}} \right)^{\oplus 2}.\]
In terms of symmetric functions, this means 
\begin{equation}\Frob \OS_n = 2h_2h_1^{n-1}.\end{equation}

As a consequence of work of Moseley, \cite[Proposition~6.1]{moseley}, the $\symm_{n+1}$ action on $\VG_n$ is the regular representation. Thus,
\begin{equation}
    \Frob \VG_n = h_1^{n+1}.
\end{equation}

The Pieri rule is a way to compute the symmetric function product $s_\lambda h_k$ in terms of the Schur basis.
It states that $s_\lambda h_k$ is the sum over all Schur functions $s_\mu$ where the Ferrers diagram of the skew shape $\mu/\lambda$ has $k$ cells in total, with no two cells in the same column.
In the special case when $\lambda = (\lambda_1)$ is a single row, then $s_\lambda = h_{\lambda_1}$ and so the sum is over $s_\mu$ where $\mu$ has at most two rows and the first row has length at least $\max(\lambda_1, k)$ and the second row has length at most $\min(\lambda_1,k)$.

\begin{example}\label{ex:TotalDims}
We can use Frobenius reciprocity to compute the following vector space dimensions:
\begin{enumerate}
    \item $\dim \OS_n^{\symm_{n+1}} = \langle 2h_2h_1^{n-1}, h_{n+1}\rangle = 2$,
    \item $\dim \invOS = \langle 2h_2h_1^{n-1}, h_nh_1\rangle = 2n$,
    \item $\dim \VG_n^{\symm_{n+1}} = \langle h_1^{n+1}, h_{n+1}\rangle = 1$,
    \item $\dim \invVG = \langle h_1^{n+1},h_nh_1\rangle = n+1$.
\end{enumerate}
These follow from iterating the Pieri rule, together with multilinearity of $\langle -,\, - \rangle$.
\end{example}

While the calculus of symmetricfunctionology allows us to get our hands dirty and do computations of vector space dimensions, there is also a topological interpretation of $\invOS$ and $\invVG$.
Work of Arnol'd \cite{arnold} and Moseley \cite{moseley} shows that $\OS_n$ and $\VG_n$ are the cohomology rings of the ordered configuration spaces of $n+1$ points in $\R^d$ when $d\geq 2$ is even and odd respectively, up to a uniform shift of grading.
This may be rephrased as the complement of the subspace arrangements $\{x_i = x_j\}$.
Define the space 
\[M_d(\A_n) := \R^{d(n+1)} - \A_n \otimes \R^d.\]

When a group $G$ acts on a topological space $X$ so that the projection $\pi: X \to X/G$ is a covering map, for example when $G$ acts freely on $X$, then there is a relationship between their rational cohomology rings.
Explicitly, the action of $G$ on $X$ induces an action of $G$ on $H^\bullet(X)$.
This induces the following isomorphism:
\[H^\bullet(X/G;\Q) \cong H^\bullet(X;\Q)^G.\]
See for example \cite[\S~3.G]{hatcher} for further detail.
Since the fixed points of the $\symm_{n+1}$ action on $(\R^d)^{n+1}$ correspond to the subspaces where the $i$th and $j$th coordinates (themselves $d$-tuples of real numbers) are equal, the action is free and thus $\symm_{n+1}$, and indeed any subgroup, acts freely on $M_d(\A_n)$. 
So for any $G \leq \symm_{n+1}$ and $d \geq 2$ we obtain the ring isomorphism
\[H^\bullet(M_d(\A_n)/G;\Q) \cong H^\bullet(M_d(\A_n);\Q)^G \cong \begin{cases}
\VG_n^G & \text{$d$ odd}\\
\OS_n^G & \text{$d$ even}\\
\end{cases}.\]
In particular, taking $G = \symm_n$, we see that
\begin{equation}
    H^\bullet(M_d(\A_n)/\symm_n;\Q) \cong
    \begin{cases}
    \invVG & \text{$d$ odd}\\
    \invOS & \text{$d$ even}\\
    \end{cases}.
\end{equation}
We can interpret $M_d(\A_n)/\symm_n$ as the configuration space of $n+1$ points in $\R^d$, the first $n$ of which are unordered and the last of which is distinguishable from the others.
More simply, $M_d(\A_n)/\symm_n$ is the \textit{mixed configuration space} of $n$ red points and $1$ blue point in $\R^d$.
Hoang \cite{hoang-b,hoang-char} studied mixed configuration spaces with a view towards number theory.

\section{Varchenko--Gel'fand invariants}\label{sec:VGInvariants}

Cohen \cite[Chapter~III]{cohen-lada-may} gave a presentation for $\VG_n$, stated purely in terms of cohomology.
In our notation, Cohen's presentation is:
\begin{equation}\label{eqn:CohenVGPresentation}
\VG_n \cong \Q[x_{ij}]_{1 \leq i \neq j \leq n+1}/\langle x_{ij}^2, x_{ij} + x_{ji}, x_{ij}x_{jk} + x_{jk}x_{ki} + x_{ki}x_{ij}\rangle.
\end{equation}
Of note, there is only one family of degree-$1$ relations, namely $x_{ij} + x_{ji}$.
The $\symm_{n+1}$ action on $\VG_n$ permutes the subscripts of the $x_{ij}$ variables.\footnote{In fact there is also an interesting $\symm_{n+2}$ action discovered by Whitehouse \cite{whitehouse} and studied more recently by Early and Reiner \cite{early-reiner}.}
The $\symm_{n+1}$ restricts to a $\symm_n$ action by considering permutations fixing $1$.
The orbit sum $z=\sum_{i=2}^{n+1} x_{1,i}$ is fixed under this $\symm_n$ action.

The presentation also shows that $(ij)\cdot x_{ij} = x_{ji} = -x_{ij}$.
A standard calculation then shows that the Frobenius characteristic of the degree-$1$ component of $\VG_n$ is $e_2h_1^{n-1}$.
The pairing $\langle e_2 h_1^{n-1}, h_n h_1 \rangle = 1$ shows that $\dim \VG_n^{\symm_{n}} = 1.$
Having found a nonzero degree-$1$ invariant element, namely $z$, one concludes that the degree-$1$ component of $\VG_n^{\symm_n}$ is the $\Q$-span of $\{z\}$.

\begin{lemma}\label{lem:PowersOfVGGenerator}
    Let $z = \sum_{i=2}^{n+1} x_{1,i} \in \VG_n$.
    Then $z^d \neq 0$ for $0\leq d\leq n$.
\end{lemma}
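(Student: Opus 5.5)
The plan is to expand $z^d$ directly and recognize the result as a nonzero combination of NBC monomials, then invoke the fact, recalled in \Cref{sec:Background}, that NBC monomials form a $\Q$-basis of $\VG(\A)$.

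First, $\VG_n$ is commutative, since by Cohen's presentation \eqref{eqn:CohenVGPresentation} it is a quotient of a polynomial ring. So the multinomial theorem applies to $z^d = \bigl(\sum_{i=2}^{n+1} x_{1,i}\bigr)^d$. The relations $x_{1,i}^2 = 0$ kill every term in which some index repeats. Hence
\[ z^d = d! \sum_{\substack{S \subseteq \{2,\ldots,n+1\}\\ |S| = d}} \prod_{i \in S} x_{1,i}. \]
For $0 \le d \le n$ the index set is nonempty, since $\{2,\ldots,n+1\}$ has $n$ elements. For $d=0$ the statement is trivial.

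Second, I identify each summand with a handful. The generator $x_{1,i}$ (with $1<i$) is the generator attached to the hyperplane $h_{1i}$. In Barcelo's model it corresponds to the finger $(i,1)$ of the hand $H_i$. For a fixed $S$, the monomial $\prod_{i\in S} x_{1,i}$ therefore uses exactly one finger from each hand $H_i$ with $i\in S$, and no finger from the other hands. Since at most one finger is taken from each hand, this is an NBC monomial for the lexicographic order on hyperplanes. Distinct subsets $S$ give distinct handfuls, hence distinct NBC monomials. There is no sign ambiguity, because every factor is written as $x_{1,i}$ with first index smaller, so no use of $x_{ij} = -x_{ji}$ is needed.

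Finally, the NBC monomials are linearly independent in $\VG_n$. The expression for $z^d$ is then a linear combination of $\binom{n}{d} \ge 1$ distinct basis elements, each with coefficient $d! \neq 0$ in $\Q$, so $z^d \neq 0$.

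The only step needing care is the second: checking that the finger $(i,1)$ of $H_i$ really is the hyperplane $h_{1i}$ in the paper's indexing, and that the NBC basis statement applies verbatim to $\VG_n$ with these generators. I expect this to be a bookkeeping check against \Cref{def:HandsFingers} rather than a genuine obstacle. As a cross-check, the $d=n$ case gives $z^n = n!\,x_{12}x_{13}\cdots x_{1,n+1}$, which is a single top-degree NBC monomial.
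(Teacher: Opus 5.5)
Your proof is correct and follows the paper's route. Both expand $z^d$, use $x_{1,i}^2=0$ to reduce to squarefree products $\prod_{i\in S}x_{1,i}$, note that each product takes one finger from each of the distinct hands $H_i$, and conclude nonvanishing from the NBC basis. Your version is a bit more explicit than the paper's "positive sum of NBC monomials", since you track the coefficient $d!$ and check that distinct $S$ give distinct basis elements, but the argument is the same.
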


\begin{proof}
    Recall from \Cref{def:HandsFingers} the combinatorial model for no-broken-circuit (NBC) sets of $\A_n$ by handfuls with no two fingers on the same hand.
    The sum $z$ is the multivariate generating function in the variables $x_{1,f}$ for ways to choose a single finger $(1,f)$.
    Since $x_{ij}^2=0$, we interpret $z^d$ as the multivariate generating function for ways to choose exactly $d$ distinct fingers $\{(1,f_1), (1,f_2),\ldots (1,f_d)\}=F$. 
    The finger $(1,f_i)$ belongs to the hand $H_{f_i}$, and so every finger in $F$ belongs to a different hand.
    Then $z^d$ is a positive sum of NBC monomials for $0 \leq d \leq n$ and so $z^d \neq 0$.
\end{proof}

We are now ready to prove \Cref{thm:VGnPresentation}.

\begin{proof}[Proof (of \Cref{thm:VGnPresentation})]

\Cref{ex:TotalDims} tells us that $\dim \VG_n^{\symm_n} = n+1$.
\Cref{lem:PowersOfVGGenerator} tells us that $B = \{1,z,z^2,\ldots,z^n\}$ are all nonzero.
Furthermore, since each $z^d$ lives in a different degree, $B$ is evidently linearly independent.
Since $|B|=n+1$, it is a basis for $\dim \VG_n^{\symm_n}$.
To see that $z^{n+1} = 0$ in $\VG_n$, use the fact that $\VG_n = \oplus_{d=0}^{n} \VG_{n,d}$ is only nontrivial in degree $d \leq n$, together with the observation that $z^{n+1}$ has degree $n+1$.
\end{proof}

\section{The Hilbert-Poincar\'e polynomial of Orlik--Solomon invariants}\label{sec:HilbOS}

The story for the Orlik--Solomon algebra is more subtle.
There is a presentation for $\OS_n$ that is similar to \Cref{eqn:CohenVGPresentation}, originally due to Arnol'd \cite{arnold}:
\begin{equation}\label{eqn:OSPresentation}
    \OS_n \cong \wedge \{e_{ij}\}_{1 \leq i \neq j \leq n+1} / 
    \langle e_{ij} - e_{ji},\, e_{ik}e_{jk} - e_{ij}e_{jk} + e_{ij}e_{ik} \rangle.
\end{equation}
The relation $e_{ij}^2 = 0$ in the exterior algebra mirrors the $x_{ij}^2 = 0$ relation in $\VG_n$.

In their landmark paper \cite{orlik-solomon}, Orlik and Solomon were able to determine that the graded $G$ representation structure of $\OS(\A)$ for any arrangement $\A$ where $G$ acts by permuting hyperplanes depends only on the lattice of flats $\L(\A)$.

\begin{lemma}[{\cite[{Cor. 5.6}]{orlik-solomon}}]
    Let $WH_{\bullet}(\L(\A))$ denote the \textit{Whitney homology} of the \textit{lattice of flats} $L(\A)$ of the arrangement $\A$. 
    Then \[\OS(\A) \cong WH_{\bullet}(\L(\A))\] as graded representations.
\end{lemma}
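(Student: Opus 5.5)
The plan is to reduce the statement to a local one, flat by flat, and then match each local piece with the reduced homology of an interval in $\L(\A)$. Recall that Whitney homology is
\[WH_p(\L(\A)) = \bigoplus_{X \in \L(\A),\ \operatorname{rk} X = p} \tilde H_{p-2}\big((\hat 0, X)\big),\]
where $(\hat 0, X)$ is the open interval, viewed as its order complex (for $p \le 1$ we use the usual conventions: $\tilde H_{-2}(\emptyset)=\Q$ at $X = \hat 0$ and $\tilde H_{-1}(\emptyset)=\Q$ at the atoms). An element $g \in G$ permutes flats, so it sends the summand for $X$ to the summand for $gX$.

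\textbf{Step 1: the Brieskorn decomposition of $\OS(\A)$.} For a set of hyperplanes $S$ write $e_S$ for the corresponding monomial. For each flat $X$, let $\OS(\A)_X$ be the span of the monomials $e_S$ with $\cap S = X$. Every Orlik--Solomon relation is homogeneous with respect to this intersection, so $\OS(\A)_p = \bigoplus_{\operatorname{rk}X = p} \OS(\A)_X$. Moreover $g \in G$ maps $\OS(\A)_X$ to $\OS(\A)_{gX}$. The summand $\OS(\A)_X$ is the top-degree piece of the Orlik--Solomon algebra of the localization $\A_X = \{h \in \A : h \supseteq X\}$. Its NBC basis consists of the NBC sets with intersection $X$, so $\dim \OS(\A)_X = |\mu(\hat 0, X)|$. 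It therefore suffices to construct, for each $X$, an isomorphism $\phi_X : \OS(\A)_X \to \tilde H_{p-2}((\hat 0, X))$ that is natural: $g\circ\phi_X = \phi_{gX}\circ g$.

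\textbf{Step 2: the explicit map.} For an independent set $S = \{h_1,\dots,h_p\}$ with $\cap S = X$, define
\[\phi_X(e_{h_1}\cdots e_{h_p}) = \sum_{\sigma \in \symm_p} \operatorname{sgn}(\sigma)\,\big[\,h_{\sigma 1} < h_{\sigma 1}\vee h_{\sigma 2} < \cdots < h_{\sigma 1}\vee\cdots\vee h_{\sigma(p-1)}\,\big],\]
where the bracket is a top-dimensional oriented simplex of the order complex of $(\hat 0, X)$. Dependent monomials are sent to $0$. I will then check three things.
\begin{enumerate}
\item The image is a cycle. Deleting an interior element of a chain pairs $\sigma$ with $\sigma\circ(k\ k{+}1)$, and these terms cancel. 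Deleting the last element leaves a chain whose omitted coordinate is forced by the remaining ones; I expect cancellation here too, by modularity of ranks in a geometric lattice, though this case needs care.
\item The map is well defined. It is alternating in the $h_i$ by construction. On the relations $\partial e_C$ for circuits $C$, each term of $\partial e_C$ with the right support must map to an alternating sum that cancels. This should follow from the fact that all the terms share the same joins once two or more hyperplanes are included.
\item The map is $G$-equivariant. Permuting hyperplanes permutes the factors of $e_S$, which changes its sign by exactly the sign of the reordering. The sum over $\sigma$ absorbs this sign in the same way, so the orientations match.
\end{enumerate}

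\textbf{Step 3: bijectivity.} This is the main obstacle. By Step 1 and the fact that $|\mu(\hat 0,X)| = \operatorname{rank} \tilde H_{p-2}((\hat 0,X))$ (the lattice is Cohen--Macaulay, so homology is concentrated in top degree), it suffices to prove injectivity, or equivalently surjectivity. I would first try a triangularity argument. Order chains lexicographically with respect to the fixed hyperplane order. For an NBC set $S$ listed in increasing order, the chain in $\phi_X(e_S)$ built from the increasing order should be the leading term. The images of distinct NBC sets should then have distinct leading terms. These images correspond to Björner's NBC basis of the top homology of a geometric lattice, so they are linearly independent, and counting dimensions finishes the argument. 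If the leading-term argument proves delicate, the fallback is to compare characters: compute $\operatorname{tr}(g)$ on both sides via the Hopf trace formula on the fixed subposet, and match it with the Möbius function of the $g$-fixed flats. This gives the representation isomorphism abstractly, although not the explicit map.
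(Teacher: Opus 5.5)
The paper does not prove this lemma; it only cites Orlik--Solomon. So your proposal has to stand on its own. Most of it does: the Brieskorn decomposition, the map $\phi_X$, the reduction of bijectivity to injectivity via Cohen--Macaulayness, and the appeal to Bj\"orner's NBC cycles $\rho_B$ (which are exactly $\phi_X(e_B)$) together give an explicit equivariant isomorphism. Equivariance is even easier than you suggest, since $g$ simply pushes chains forward and preserves joins.

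\textbf{The gap is Step 2(ii), well-definedness.} You give no valid argument there.
\begin{itemize}
\item The relations to be killed in $\OS(\A)_X$ are not just $\partial e_C$. They are the elements $e_T\,\partial e_C$ with $T\cap C=\emptyset$, $|T|+|C|=p+1$ and $T\cup C$ spanning $X$. The chains these produce interleave elements of $T$ and $C$.
\item Your heuristic that the terms ``share the same joins once two or more hyperplanes are included'' is false for any circuit with $|C|\ge 4$. For such a circuit the joins $c_i\vee c_j$ are pairwise distinct rank-$2$ flats.
\end{itemize}

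The missing idea is a level function.
\begin{itemize}
\item Fix a maximal chain $F:\hat 0=x_0<x_1<\cdots<x_p=X$ and set $\ell(h)=\min\{k : h\le x_k\}$. Let $\epsilon_k$ denote the standard basis of $\Q^p$.
\item For independent $h_1,\ldots,h_p$, the coefficient of $F$ in $\phi_X(e_{h_1}\cdots e_{h_p})$ is $\det[\epsilon_{\ell(h_1)},\ldots,\epsilon_{\ell(h_p)}]$. This expression also vanishes on dependent monomials, because bijective levels force independence.
\item Hence the $F$-coordinate of $\phi_X$ is $\det\circ\wedge^p L_F$, where $L_F(e_h)=\epsilon_{\ell(h)}$. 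The algebra map $\wedge L_F$ intertwines $\partial$ with the derivation $\partial'$ of $\wedge\Q^p$ that sends every $\epsilon_k$ to $1$.
\item In a circuit $C$ the maximal level is attained at least twice. Indeed, if $c$ alone had maximal level $k$, then $c\le\bigvee(C-c)\le x_{k-1}$, a contradiction.
\item Therefore $\wedge L_F(e_C)=0$, and so $\wedge L_F(e_T\,\partial e_C)=\wedge L_F(e_T)\wedge\partial'(\wedge L_F(e_C))=0$.
\end{itemize}
Equivalently, either no term or exactly two terms of $e_T\,\partial e_C$ contribute to $F$, and two such terms cancel. Without this, Step 2 is not proved.

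\textbf{The triangularity sketch in Step 3 fails as stated.} Take three lines through $0$ in a plane, with $h_1<h_2<h_3$. The NBC sets $\{h_1,h_2\}$ and $\{h_1,h_3\}$ map to $[h_1]-[h_2]$ and $[h_1]-[h_3]$. Both ``increasing-order'' chains are $[h_1]$, so the proposed leading terms coincide.

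The fix is to use the decreasing order. For an NBC set $s_1<\cdots<s_p$, consider the chain $s_p<s_p\vee s_{p-1}<\cdots$. Its smallest hyperplane at level $k$ is $s_{p-k+1}$, since a smaller one would produce a broken circuit inside $S$. So any NBC set whose image contains this chain is levelwise $\ge S$, and the system is triangular. Alternatively, cite Bj\"orner's basis theorem outright, as your last sentence does.

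\textbf{A minor point on the cycle check.} The last face needs no modularity. The pairing $\sigma\leftrightarrow\sigma\circ(p{-}1\ p)$ works there exactly as for the other faces.
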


Using the combinatorics of $\L(\A)$ to compute the representation structure allows for a better understanding of the structure of $\OS(\A)$.
As one example of this, Sundaram \cite{sundaram} proved the following equivariant recursion for a finite poset $P$ that is Cohen--Macaulay.
Recall this means that $P$ is ranked and that the reduced homology of the order complex $\Delta(x,y)$ of the open interval $(x,y)$ vanishes below the top degree $\rk(y)-\rk(x)-2$.

\begin{lemma}{\cite[Prop.~1.9]{sundaram}}
    Let $P$ be a Cohen--Macaulay poset of rank $n$ and let
    $P(r) = \{x \in P : \rk x \leq r\}.$
    Then, for $1\leq r \leq n - 1$ there is an isomorphism of representations
    \[\Tilde{H}(P(r)) \oplus \Tilde{H}(P(r-1)) \cong WH_r(P),\]
    when taking the convention that $\Tilde{H}(P(0))=WH_0(P)$ and $\Tilde{H}P(r))=0$ for $r < 0$ and $r > n-1$.
\end{lemma}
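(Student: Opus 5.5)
The plan is to build $\Delta(P(r))$ from $\Delta(P(r-1))$ one rank at a time and read off the isomorphism from the long exact sequence of the pair, using the Cohen--Macaulay hypothesis to kill all but one degree. Throughout, $P$ is bounded below by $\hat 0$, $P(r)$ means the proper part of rank at most $r$ (ranks $1,\dots,r$), and $WH_r(P)=\bigoplus_{\rk x=r}\tilde H_{r-2}(\hat 0,x)$. A group $G$ acting on $P$ by rank-preserving automorphisms acts on every space and map below, and all maps are natural. Hence every identification is $G$-equivariant. Since we work over $\Q$, Maschke's theorem lets every short exact sequence of $G$-modules split as representations.

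\textbf{Step 1: homology is concentrated in one degree.} I would invoke the standard fact (Munkres/Baclawski) that rank-selected subposets of a Cohen--Macaulay poset are again Cohen--Macaulay. In particular $P(r)$, which has rank $r$ and order complex of dimension $r-1$, has reduced homology only in degree $r-1$. Likewise each open interval $(\hat0,x)$ with $\rk x=r$ has homology only in degree $r-2$. So the phrase $\tilde H(P(r))$ in the statement means $\tilde H_{r-1}(P(r))$.

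\textbf{Step 2: relative homology.} The complex $\Delta(P(r))$ is obtained from $\Delta(P(r-1))$ by adjoining, for each $x$ of rank $r$, the cone $x*\Delta(\hat0,x)$ along its base $\Delta(\hat0,x)\subseteq\Delta(P(r-1))$. Distinct cones meet only inside $\Delta(P(r-1))$, since no chain contains two elements of rank $r$. By excision, or equivalently by collapsing $\Delta(P(r-1))$, we get
\[
H_k(P(r),P(r-1))\cong\bigoplus_{\rk x=r}\tilde H_{k-1}(\hat0,x).
\]
By Step 1 this vanishes unless $k=r-1$, and for $k=r-1$ it equals $WH_r(P)$. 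The $G$-action permutes the cones compatibly, so the isomorphism is equivariant.

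\textbf{Step 3: the long exact sequence.} The long exact sequence of the pair, with the vanishing from Step 1, gives
\[
0=\tilde H_{r-1}(P(r-1))\to\tilde H_{r-1}(P(r))\to WH_r(P)\to\tilde H_{r-2}(P(r-1))\to\tilde H_{r-2}(P(r))=0 .
\]
The left term vanishes because $\Delta(P(r-1))$ has dimension $r-2$. The right term vanishes by Step 1 when $r\ge2$. Splitting this short exact sequence equivariantly gives $\tilde H(P(r))\oplus\tilde H(P(r-1))\cong WH_r(P)$.

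\textbf{Boundary case.} For $r=1$, $P(1)$ is the discrete set of atoms and $WH_1$ is spanned by the atoms. The reduced homology sequence $0\to\tilde H_0(P(1))\to\Q[\text{atoms}]\to\Q\to0$ is exactly the claim with the convention $\tilde H(P(0))=WH_0(P)=\Q$ trivial.

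\textbf{Main obstacle.} I expect the hardest step to be the input to Step 1, namely that rank selection preserves the Cohen--Macaulay property. My first attempt would be to cite it; if a self-contained argument is needed, I would use Reisner's link criterion. Links in the rank-selected complex are joins of rank-selections of intervals, which are again Cohen--Macaulay, so one can induct on rank. Secondary care is needed to check that the excision isomorphism in Step 2 commutes with the $G$-action, but this is formal.
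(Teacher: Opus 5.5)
Your proof is correct. The paper gives no argument for this lemma; it only cites Sundaram's Prop.~1.9, so the meaningful comparison is with Sundaram's original route.

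Your steps all hold up:
\begin{itemize}
\item Reading $P(r)$ as ranks $1,\dots,r$ of the proper part is the right interpretation. With $\hat 0$ included, $\Delta(P(r))$ would be a cone and the statement would fail.
\item The cone decomposition gives $H_k(P(r),P(r-1))\cong\bigoplus_{\rk x=r}\tilde H_{k-1}(\hat 0,x)$ naturally, hence equivariantly.
\item The long exact sequence yields $0\to\tilde H_{r-1}(P(r))\to WH_r(P)\to\tilde H_{r-2}(P(r-1))\to 0$, and Maschke splits it since $G$ is finite.
\end{itemize}

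Sundaram works instead with Lefschetz modules, via the Hopf trace formula. Grouping the chains of $P(r)$ by their top element gives the virtual identity $\tilde H(P(r))=\sum_{i=0}^{r}(-1)^{r-i}WH_i(P)$. Adding the identities for $r$ and $r-1$, and using concentration of homology together with the fact that characters determine $\Q G$-modules, gives the claim. Her identity is essentially the Euler characteristic of your long exact sequence. It holds as a virtual statement for any poset with $\hat 0$, but it recovers the isomorphism only through characters. Your argument gives more in the Cohen--Macaulay case: a natural short exact sequence with explicit maps (inclusion and connecting homomorphism). It is valid over any field for which $P$ is Cohen--Macaulay, and semisimplicity is needed only for the final splitting.

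Your ``main obstacle'' is not really one. You can drop the rank-selection theorem by running your own Step~2 downward from $P(n-1)$, the proper part of $P$, which is Cohen--Macaulay by hypothesis. Suppose $\tilde H_j(P(r+1))=0$ for all $j<r$. For $j\le r-2$, consider the segment $H_{j+1}(P(r+1),P(r))\to\tilde H_j(P(r))\to\tilde H_j(P(r+1))$. Both outer terms vanish: the relative group is concentrated in degree $r$, and the right-hand term vanishes by induction. Hence $\tilde H_j(P(r))=0$ for $j<r-1$. This uses only that the proper part and the intervals $(\hat 0,x)$ are Cohen--Macaulay.

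This is also the piece missing from your Reisner fallback. The link of the empty face is $\Delta(P(r))$ itself, so induction over links of nonempty faces never delivers the global vanishing you actually need.
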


Without precisely defining what it means for a poset to be Cohen--Macaulay, we will say that the set partition lattice $\Pi_{n+1}$ is a prototypical example of a Cohen--Macaulay poset.
This is a particularly important example because of the well known fact that $\Pi_{n+1}$ is isomorphic to the lattice of flats $\L(\A_n)$ of the braid arrangement.
Let $\OS_{n,d}$ and $\invOS[n,d]$ denote the degree-$d$ graded components of $\OS_n$ and $\invOS$ respectively.
Sundaram's result implies the following corollary.

\begin{corollary}\label{cor:OSnddirectsum}
    Let $V_{n+1}(d) = \Tilde{H}_d(\Pi_{n+1})$.
    Then, as graded representations,
    \[\OS_{n,d} \cong WH_{d}(\Pi_{n+1}) \cong V_{n+1}(d) \oplus V_{n+1}(d-1).\]
\end{corollary}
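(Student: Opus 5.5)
This corollary should follow almost directly from the two preceding lemmas, applied to the partition lattice. The plan is to first identify $\OS_{n,d}$ with Whitney homology and then split that homology using Sundaram's recursion.

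\textbf{Step 1: identify $\OS_{n,d}$ with Whitney homology.} By the Orlik--Solomon lemma (\cite[Cor.~5.6]{orlik-solomon}), $\OS(\A_n)\cong WH_\bullet(\L(\A_n))$ as graded representations. This holds for any group acting by permuting hyperplanes, and in particular for $\symm_{n+1}$. The lattice isomorphism $\L(\A_n)\cong\Pi_{n+1}$ sends the flat $\bigcap h_{ij}$ to the set partition generated by the blocks $\{i,j\}$. This isomorphism is $\symm_{n+1}$-equivariant, since permuting coordinates permutes the hyperplanes $h_{ij}$ exactly as it permutes the labels of set partitions. It is also rank-preserving, with codimension corresponding to $n+1$ minus the number of blocks. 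Consequently it induces an equivariant isomorphism on Whitney homology degree by degree, giving $\OS_{n,d}\cong WH_d(\Pi_{n+1})$.

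\textbf{Step 2: apply Sundaram's recursion.} The lattice $\Pi_{n+1}$ is Cohen--Macaulay (it is geometric, hence shellable) of rank $n$. Sundaram's recursion (\cite[Prop.~1.9]{sundaram}) then gives $WH_d(\Pi_{n+1})\cong \Tilde H(\Pi_{n+1}(d))\oplus\Tilde H(\Pi_{n+1}(d-1))$. Each rank-selected subposet is again Cohen--Macaulay, so its reduced homology is concentrated in a single degree. I read the notation $V_{n+1}(d)=\Tilde H_d(\Pi_{n+1})$ as shorthand for exactly this top homology $\Tilde H(\Pi_{n+1}(d))$. Under that reading the displayed isomorphism is immediate for $1\le d\le n-1$.

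\textbf{Step 3: the boundary degrees.} The remaining cases are $d=0$ and $d=n$.
\begin{itemize}
\item For $d=0$, the stated conventions give $V_{n+1}(0)=WH_0$ and $V_{n+1}(-1)=0$, so both sides equal the trivial representation $\OS_{n,0}$.
\item For $d=n$, the convention $V_{n+1}(n)=0$ gives $\OS_{n,n}\cong WH_n(\Pi_{n+1})\cong V_{n+1}(n-1)$. This should be checked directly: the top Whitney homology is the reduced homology of the proper part $\Pi_{n+1}(n-1)$, which is the familiar $\mathrm{Lie}_{n+1}\otimes\mathrm{sgn}$ up to grading conventions.
\end{itemize}

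\textbf{Main obstacle.} The only real difficulty is bookkeeping rather than mathematics. One must match the indexing conventions (rank $r$ versus homological degree $r-1$, and the boundary conventions) so that the shorthand $V_{n+1}(d)$ is consistent with Sundaram's statement, including at $d=0$ and $d=n$. One must also confirm that the isomorphism of Step 1 is equivariant rather than merely an isomorphism of graded vector spaces.
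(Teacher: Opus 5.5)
Your proposal is correct and follows essentially the same route as the paper. The paper obtains the corollary directly by combining three ingredients you also use: the Orlik--Solomon identification of $\OS(\A)$ with Whitney homology, the equivariant rank-preserving isomorphism between the lattice of flats of $\A_n$ and $\Pi_{n+1}$, and Sundaram's recursion. Your reading of $V_{n+1}(d)$ as $\tilde H(\Pi_{n+1}(d))$, which matches the paper's later usage, and your separate treatment of the degrees $d=0$ and $d=n$ simply make explicit what the paper leaves implicit.
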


Sundaram \cite{sundaram} was also able to determine the multiplicity of some irreducible $\symm_n$ representations inside of the homology $\Tilde{H}_\bullet(\Pi_n(d))$.

\begin{lemma}{\cite[Cor.~2.3]{sundaram}}\label{lem:SundaramCor2.3}
    Let $n \geq 3$ and $0 \leq r \leq n-2$ and let $V_n(r) = \Tilde{H}(\Pi_n(r))$.
    Then
    \begin{enumerate}
        \item $\displaystyle \langle V_n(r), V^{n}\rangle = 0$
        \item $\displaystyle \langle V_n(r), V^{n-1,1} \rangle = 1$
    \end{enumerate}
    where $V^\lambda$ is the irreducible representation of $\symm_n$ such that $\Frob(V^\lambda) = s_\lambda$.
\end{lemma}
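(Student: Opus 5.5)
The plan is to reduce both multiplicities to an alternating sum over the graded pieces of the Orlik--Solomon algebra. Those pieces are controlled by the Frobenius characteristic $\Frob\OS = 2h_2h_1^{\bullet}$ and the Pieri rule from \Cref{sec:Background}. I would apply this to the arrangement $\A_{n-1}$, whose lattice of flats is $\Pi_n$, so the relevant group is $\symm_n$.

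\emph{Step 1: telescoping.} I would iterate the recursion of \cite[Prop.~1.9]{sundaram} in the form of \Cref{cor:OSnddirectsum}, $WH_k(\Pi_n)\cong V_n(k)\oplus V_n(k-1)$. Solving it with the boundary convention $V_n(0)=WH_0$ gives the virtual-representation identity
\[
V_n(r)=\sum_{k=0}^{r}(-1)^{r-k}\,WH_k(\Pi_n)=\sum_{k=0}^{r}(-1)^{r-k}\,\OS_{n-1,k}.
\]
I will need to check the rank conventions carefully. The statement requires the trivial multiplicity to vanish already at $r=0$, so the indexing must be the one in which $V_n(r)$ is the reduced homology of the proper part truncated at rank $r+1$. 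In that convention the sum runs from $k=0$ to $r+1$, and the $k=0$ term enters with the opposite sign to the $k=1$ term. I will fix whichever indexing makes $V_n(0)$ the reduced $H_0$ of the discrete set of atoms. That representation visibly has trivial multiplicity $0$ and contains $V^{n-1,1}$ once, which serves as a sanity check. The range $0\le r\le n-2$ should then be exactly the range in which the top piece $\OS_{n-1,n-1}$ does not enter the sum.

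\emph{Step 2: degree-wise multiplicities.} For each degree $k$ I need $t_k=\langle \OS_{n-1,k},V^{n}\rangle$ and $m_k=\langle \OS_{n-1,k},V^{n-1,1}\rangle$. For $t_k$, the invariants are spanned by $1$ in degree $0$ and the orbit sum $\sum e_{ij}$ in degree $1$. The total count is $2$ by \Cref{ex:TotalDims}(1), so $t=(1,1,0,\dots,0)$. For $m_k$, I would use $\langle \Frob V,h_{n-1}h_1\rangle = t_k+m_k$, which counts $\symm_{n-1}$-invariants in degree $k$. The total of these is $2(n-1)$ by \Cref{ex:TotalDims}(2). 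I expect the degree distribution to be $(1,2,2,\dots,2,1)$, giving $m=(0,1,2,\dots,2,1)$. To establish this I would compute $\Frob\OS_{n-1,k}$ degree by degree: as a sum of $h_2h_1^{n-2}$-type terms, the degree-$k$ piece is a sum of permutation characters induced from stabilizers of $k$-forests. Pairing each of these with $h_{n-1}h_1$ via Pieri counts orbits of $\symm_{n-1}$, and the counts are small: at most two orbits in each degree. Alternatively, I could exhibit two independent $\symm_{n-1}$-invariants in each middle degree, for example $\mu\gamma^{j}$ and $\alpha\gamma^j$-type products. However, that would anticipate later sections, so I prefer the orbit count.

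\emph{Step 3: conclusion.} Substituting into the telescoped sum, the trivial multiplicity is $\pm(t_1-t_0)=0$ for every admissible $r$. The $V^{n-1,1}$ multiplicity is an alternating sum of the form $2-2+\cdots\pm(1-0)=1$, because the initial segment $0,1$ contributes $1$ and the constant run of $2$'s cancels in pairs. The exclusion of the final $1$ is exactly why $r\le n-2$ is needed.

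The main obstacle is Step 2, specifically pinning down the middle-degree counts $t_k+m_k=2$ rather than just the total. I would first attempt a direct orbit count of NBC forests under $\symm_{n-1}$, using a spectral sequence-free Pieri argument. The index bookkeeping in Step 1 is the second place where errors are likely.
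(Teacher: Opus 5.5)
\emph{Note:} the paper gives no proof of this lemma. It only cites Sundaram, so your route is independent of the paper. Step~1 (as an identity of virtual representations) and your computation $t=(1,1,0,\dots,0)$ are fine.

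\emph{Step~2 has no valid argument.} The counts $t_k+m_k=(1,2,\dots,2,1)$ are exactly \Cref{prop:OSinvariantHilb} with $n-1$ in place of $n$. The paper deduces that proposition \emph{from} this lemma, so you cannot import it, nor the results of \Cref{sec:LinearBasis}, which invoke it. The substitute you sketch rests on a false premise: $\OS_{n-1,k}$ is not a sum of permutation modules on $k$-forests. The group $\symm_n$ moves forest monomials only up to sign and modulo the Orlik--Solomon relations. Your own value $t_k=0$ for $2\le k\le n-1$ already rules the premise out, since every nonzero permutation module contains the trivial representation; for instance $\OS_{n-1,n-1}\cong\mathrm{sgn}\otimes\mathrm{Lie}_n$ does not. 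Only the ungraded total $2h_2h_1^{n-2}$ is a permutation character. Consequently orbit counts are not invariant dimensions. Once $n\ge 5$ there are already five $\symm_{n-1}$-orbits of two-edge forests: a path with $n$ at its center, at an end, or absent; a matching with $n$ on an edge or absent. The number you need is $2$. NBC forests do not help either, since they do not form an $\symm_{n-1}$-stable set.

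The route that can work is the one you set aside, in a ``lower bounds plus total'' form. If you show at least $2$ independent $\symm_{n-1}$-invariants in each degree $1\le k\le n-2$ and a nonzero one in degree $n-1$, these bounds add up to the ungraded total $2(n-1)$ of \Cref{ex:TotalDims}, so they are equalities. Those lower bounds need their own proof, however. Applying $\partial$ cannot certify the even-degree pairs: with $a,m,c$ as in \Cref{def:BasisElements}, $\partial(2amc^d-\tfrac{n}{d+1}c^{d+1})=0$. Alternatively, compute the graded characters directly, for example from the Lehrer--Solomon decomposition, before pairing with $h_{n-1}h_1$.

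\emph{Your range bookkeeping is also wrong, and it hides an off-by-one in the statement as printed.} In your indexing $V_n(r)$ is $\tilde H_r$ of ranks $1,\dots,r+1$, so
\[
V_n(r)=\sum_{k=0}^{r+1}(-1)^{r+1-k}\,\OS_{n-1,k}.
\]
Thus the top piece $\OS_{n-1,n-1}$ \emph{does} enter at $r=n-2$. Your own data give $\langle V_n(n-2),V^{n-1,1}\rangle=m_{n-1}-\langle V_n(n-3),V^{n-1,1}\rangle=1-1=0$. This is forced: the proper part of $\Pi_n$ has dimension $n-3$, so $V_n(n-2)=0$. In the convention of \Cref{cor:OSnddirectsum}, which forces $V_n(0)=\OS_{n-1,0}$ to be trivial, the failure instead moves to $r=0$.

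Under either convention, the two equalities hold for exactly $n-2$ consecutive values of $r$, not for all $n-1$ values $0\le r\le n-2$. For $n=3$, the only representation with both properties is $\tilde H_0$ of the three atoms, namely $s_{21}$. So once Step~2 is repaired, your argument proves the lemma for $0\le r\le n-3$ in your convention, equivalently $1\le r\le n-2$ in the paper's. You should correct your claim that excluding the final $1$ explains the bound $r\le n-2$. The discrepancy is harmless downstream: \Cref{prop:OSinvariantHilb} only uses that each nonzero $V_{n+1}(r)$ has total multiplicity $1$ in $V^{n+1}\oplus V^{n,1}$, which remains true.
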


In \Cref{ex:TotalDims}, we computed $\dim \invOS = 2n.$
We refine that computation into the Hilbert series of $\invOS$.

\begin{proposition}\label{prop:OSinvariantHilb}
    For $n \geq 2$,
    $\Hilb(\invOS;t)=(1+t)(1+t+t^2 + \cdots + t^{n-1}).$
\end{proposition}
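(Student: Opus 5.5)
We need $\dim \invOS[n,d]$ for each $d$. Taking $\symm_n$-invariants commutes with direct sums, so \Cref{cor:OSnddirectsum} gives
\[\dim \invOS[n,d] = \dim V_{n+1}(d)^{\symm_n} + \dim V_{n+1}(d-1)^{\symm_n}.\]
We then compute each summand by Frobenius reciprocity, as in \Cref{ex:TotalDims}. The representation $V_{n+1}(r)$ is an $\symm_{n+1}$-representation restricted to $\symm_n$. The Pieri rule gives $h_nh_1 = s_{n+1} + s_{n,1}$. Hence
\[\dim V_{n+1}(r)^{\symm_n} = \langle \Frob V_{n+1}(r), s_{n+1}\rangle + \langle \Frob V_{n+1}(r), s_{n,1}\rangle.\]

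For $n+1\geq 3$ and $0\leq r\leq n-1$, \Cref{lem:SundaramCor2.3} applies with $n+1$ in place of $n$. It says the first pairing is $0$ and the second is $1$, so $\dim V_{n+1}(r)^{\symm_n}=1$ in this range. For $r<0$ or $r>n-1$ we have $V_{n+1}(r)=0$ by the convention in Sundaram's recursion, so the dimension is $0$. Care is needed with the boundary conventions. We take $V_{n+1}(0)$ to be $WH_0$, the trivial representation; it has invariant dimension $1$ and is consistent with the value $1$ (though the lemma's claim of trivial-multiplicity $0$ for $r=0$ must be read under the paper's indexing). If necessary we check the extremes $d=0$ and $d=n$ directly: $\invOS[n,0]=\Q$, and the top degree is handled by the recursion with $V_{n+1}(n)=0$. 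Summing, $\dim\invOS[n,d]=1$ for $d=0$ and $d=n$, and $\dim\invOS[n,d]=2$ for $1\leq d\leq n-1$. This is exactly the coefficient sequence of $(1+t)(1+t+\cdots+t^{n-1})$.

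As a sanity check, the total is $2n$, which matches \Cref{ex:TotalDims}(2). This confirms that the boundary conventions have been resolved correctly.

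The main obstacle is reconciling the indexing conventions. One issue is the range $0\leq r\leq n-2$ in \Cref{lem:SundaramCor2.3}, which must be shifted to $\Pi_{n+1}$. The other is the convention $\tilde H(P(0))=WH_0$, whose trivial multiplicity is $1$ rather than $0$. If the lemma as stated conflicts with this at $r=0$, I would instead argue degree by degree using the total-dimension count: each degree contributes at most the value predicted by Sundaram, the degree-$0$ and degree-$1$ pieces can be computed directly, and the sum must equal $2n$.
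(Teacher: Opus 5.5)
Your proposal is correct and is essentially the paper's proof. You split $\OS_{n,d}$ via \Cref{cor:OSnddirectsum}, use Frobenius reciprocity with $h_nh_1=s_{n+1}+s_{n,1}$, and apply \Cref{lem:SundaramCor2.3} on $\Pi_{n+1}$ so that each $V_{n+1}(r)$ with $0\le r\le n-1$ contributes exactly one invariant. Your separate treatment of $r=0$, where $V_{n+1}(0)$ is trivial so the invariant comes from $V^{n+1}$ rather than $V^{n,1}$, is more careful than the paper, whose quoted lemma cannot hold literally at $r=0$; the contribution is $1$ either way, so your vague fallback in the last paragraph is not needed.
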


\begin{proof}
    
    Recall that $\OS_{n,d}$ is naturally thought of as an $\symm_{n+1}$ representation.
    By Frobenius reciprocity,
    \begin{align*}
        \dim {\invOS[n,d]}
        &= 
        \langle \OS_{n,d} \downarrow_{\symm_n},
        \trivrep_{\symm_n}\rangle_{\symm_n} \\
        &= 
        \langle \OS_{n,d}, \trivrep_{\symm_n}\uparrow^{\symm_{n+1}}\rangle_{\symm_{n+1}}\\
        &=\langle \OS_{n,d}, 
        V^{n+1} \oplus V^{n,1}\rangle_{\symm_{n+1}}.
    \end{align*}
    By \Cref{cor:OSnddirectsum,lem:SundaramCor2.3}
    \begin{align*}
        \langle \OS_{n,d}, 
        V^{n+1} \oplus V^{n,1}\rangle_{\symm_{n+1}}
        &=
        \langle 
            V_{n+1}(d) \oplus V_{n+1}(d-1),
            V^{n+1} \oplus V^{n,1}\rangle_{\symm_{n+1}}
        \\
        &= \begin{cases}
            2 & 1 \leq d \leq n-1\\
            1 & d=0,n\\
            0 & \text{otherwise}
        \end{cases}\qedhere
    \end{align*}
\end{proof}

\section{A linear basis for $\OS_n^{\symm_n}$}\label{sec:LinearBasis}

\Cref{prop:OSinvariantHilb} described the dimensions of $\OS_n^{\symm_n}$ as a graded vector space.
This section is dedicated to proving a linear basis for $\OS_n^{\symm_n}$.
The basis will consist of homogenous elements, and thus will describe a basis for each graded component of $\OS_n^{\symm_n}$.

\begin{definition}\label{def:BasisElements}
    Let $\hat a, \hat m, \hat c, \hat g \in \wedge \{e_{ij}\}_{1 \leq i < j \leq n}$ be defined by
    \begin{align*}
        \hat a &:= \sum_{1 \leq i < j \leq n} e_{ij}\\
        \hat m &:= \sum_{1 \leq i  \leq n} e_{i,n+1}\\
        \hat c &:= \sum_{1 \leq i < j \leq n} \left( e_{ij}e_{i,n+1} + e_{ij}e_{j,n+1}\right)\\
        \hat g &:= am - pc.
    \end{align*}
    Let $a,m,c,g$ be the images in $\OS_n$ of $\hat a, \hat m, \hat c, \hat g$.
\end{definition}

\begin{remark}
    Note that all of the monomials in the definition of $\hat a, \hat m, \hat c, \hat g$ are NBC monomials. 
    Thus, they have the same expression whether written in the exterior algebra or in $\OS_n$.
    The presence of the $\hat {\cdot}$ is to remind the reader when we want to regard a sum as an element of the exterior algebra rather than the Orlik--Solomon algebra.
    The $\hat{\cdot}$ symbol is chosen because it looks like the exterior wedge symbol $\wedge$.
\end{remark}
\begin{remark}
    By construction, $a,m,c$ are all $\symm_n$ orbit sums, and thus $a,m,c,g \in \OS_n^{\symm_n}$.
\end{remark}

\begin{proposition}\label{prop:LinearBasisC}
    Let $n \geq 2$. Let
        \[
    B = \begin{cases}
    \{a^\epsilon\, m^\delta\, c^d : \epsilon, \delta \in \{0,1\} \text{ and } 0 \leq d < p \} & n~\text{even} \\
    \{a^\epsilon\, m^\delta\, c^d : \epsilon, \delta \in \{0,1\} \text{ and } 0 \leq d < p -1 \} \cup \{c^{p-1}, ac^{p-1} + mc^{p-1}\} & n~\text{odd}.
    \end{cases}
    \]
    Let $B_k := \{b \in B : \deg b = k\}$.
    Then
    $B = \sqcup_{k=0}^n B_k$ is a graded linear basis for $\OS_n^{\symm_n}$.
\end{proposition}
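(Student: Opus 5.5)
The plan is to count first and then prove linear independence degree by degree. Spanning will then follow from \Cref{prop:OSinvariantHilb}. Every element of $B$ is homogeneous and lies in $\invOS$, because $a,m,c$ are $\symm_n$-invariant. The degrees are $|a|=|m|=1$ and $|c|=2$, so $a^\epsilon m^\delta c^d$ has degree $\epsilon+\delta+2d$.

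\textbf{Counting.} For $n$ even we have $p=n/2$. The generating function of $B$ by degree is then
\[(1+t)^2(1+t^2+\cdots+t^{2(p-1)}) = (1+t)(1+t+\cdots+t^{n-1}),\]
which matches $\Hilb(\invOS;t)$. For $n$ odd we have $p=(n+1)/2$. Here $B$ is obtained from the full set $\{a^\epsilon m^\delta c^d : d<p\}$, whose count would be $(1+t)^2(1+\cdots+t^{n-1})$, as follows: keep $c^{p-1}$ in degree $n-1$, merge the two degree-$n$ products into the single element $ac^{p-1}+mc^{p-1}$, and drop $amc^{p-1}$ in degree $n+1$. The resulting series is again $(1+t)(1+\cdots+t^{n-1})$. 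So $|B_k|=\dim \invOS[n,k]$ for every $k$, and it suffices to show that each $B_k$ is linearly independent. Since $|B_k|\le 2$, this amounts to three checks: $c^d\neq 0$; $am c^{d-1}$ and $c^d$ are not proportional in degree $2d$; and $ac^d$ and $mc^d$ are not proportional in degree $2d+1$. In the odd case the top-degree claims are instead that $ac^{p-1}+mc^{p-1}\neq0$ and $c^{p-1}\ne0$.

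\textbf{Independence via NBC expansions.} I would expand these elements in the NBC basis using the hands model. The monomials in $c$ are $e_{ij}e_{i,n+1}$ and $e_{ij}e_{j,n+1}$, with $e_{ij}$ a finger of $H_j$ and $e_{\ast,n+1}$ a finger of $H_{n+1}$. A product of two such monomials uses two fingers of $H_{n+1}$ and is therefore not NBC. It must be rewritten using the Arnol'd relation \eqref{eqn:OSPresentation}, and this is where the computation is hard. To avoid computing $c^d$ fully, my plan is to find a convenient invariant description of $c$ first. The relation gives
\[ e_{ij}e_{i,n+1}+e_{ij}e_{j,n+1} = e_{i,n+1}e_{j,n+1}+2e_{ij}e_{j,n+1}-\dots,\]
and the aim is to rewrite $c$ in terms of $am$, $m^2=0$ and a ``pure'' term so that the powers become tractable. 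Alternatively, I would work with the leading NBC monomial in a suitable term order: find a monomial, such as $\prod_{k} e_{2k-1,2k}e_{2k,n+1}$-type products with at most one finger per hand, that appears in $c^d$ with nonzero coefficient. I would then show that $amc^{d-1}$ and $c^d$ have different leading supports, for instance because one uses a finger of $H_{n+1}$ and the other does not.

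\textbf{Using $\partial$.} A cleaner tool is $\partial$ together with \Cref{lem:AcyclicPartial}. By the power rule and the Leibniz rule, $\partial a$ and $\partial m$ are nonzero scalars: $\partial a=\binom n2$ and $\partial m=n$. The image $\partial c$ is a degree-1 invariant, hence a combination of $a$ and $m$. From these I can compute $\partial(c^d)$, $\partial(ac^d)$ and $\partial(mc^d)$ recursively in terms of lower elements of $B$. Induction on degree then gives the result: a dependence relation in degree $k$ is pushed by $\partial$ to degree $k-1$, where the elements are independent. In the top degree $n$, injectivity of $\partial$ on $(\OS_n)_n$ handles the case. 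The obstacle is that $\partial$ does not preserve independence automatically. A degree-$k$ relation can map to zero, and exactness only says it then comes from degree $k+1$. The hardest step, I expect, is establishing the base nonvanishing $c^{p-1}\neq0$, and for $n$ odd showing that $ac^{p-1}$ and $mc^{p-1}$ are proportional (presumably $ac^{p-1}=(p-1)mc^{p-1}$, which is the relation in $I_n$) while their sum is nonzero. I would verify these by exhibiting an explicit NBC monomial of maximal degree with nonzero coefficient, chosen as a perfect matching of pairs $(2k-1,2k)$ attached to $n+1$.
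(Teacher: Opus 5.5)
\paragraph{Review.} Your counting reduction is fine. The heart of the proposition, linear independence of $B_k$ for $3\le k\le n-1$, is not established, and the $\partial$-induction you fall back on cannot establish it.

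The reason is structural. $(\OS_n,\partial)$ is exact and $\partial$ commutes with $\symm_n$. Taking invariants is exact over $\Q$, so the invariant complex is exact with dimensions $1,2,\dots,2,1$. Hence $\partial$ has rank exactly one on every two-dimensional piece $\OS_{n,k}^{\symm_n}$. Concretely, put $\ell=2a-(n-1)m$. Then $\partial(amc^d)=-\tfrac n2\,\ell c^d$, $\partial(c^{d+1})=-(d+1)\,\ell c^d$, and $\partial(ac^d)=\tfrac{n-1}{2}\,\partial(mc^d)$. So pushing a relation down by $\partial$ always yields a singular $2\times 2$ system, whatever you know in degree $k-1$.

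This is exactly the paper's route in \Cref{lemma:ConsecutiveDegreeLI}. It only appears to work there because the computation uses $\partial(am)=m-a$ rather than $-na+\binom n2 m$ from \Cref{lem:PartialIdentities}. With the correct value, the determinant $-(d+1)n(n-1)+2(d+1)\binom n2$ is $0$. So your caveat that ``$\partial$ does not preserve independence automatically'' is fatal, not a technicality. $\partial$ certifies nonvanishing only in degree $n$, where it is injective. Elsewhere it only reduces the problem to nonvanishing of $\partial$-cycles such as $\ell c^d$, which $\partial$ cannot detect.

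The NBC route would have to supply that nonvanishing, and it is left undone. Your concrete suggestions would not work:
\begin{itemize}
\item A product $\prod_k e_{2k-1,2k}e_{2k,n+1}$ with two or more factors uses several fingers of $H_{n+1}$, so it is not NBC.
\item The criterion ``one uses a finger of $H_{n+1}$, the other does not'' separates nothing. The relation $e_{i,n+1}e_{j,n+1}=e_{ij}(e_{j,n+1}-e_{i,n+1})$ forces every NBC term of $c^d$, $amc^{d-1}$, $ac^d$ and $mc^d$ (for $d\ge 1$) to contain exactly one finger of $H_{n+1}$.
\end{itemize}

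\paragraph{How to close the gap.} Define $\rho:\OS_n\to\wedge[\theta_1,\dots,\theta_n]$ by $e_{ij}\mapsto\theta_{\min(i,j)}$.
\begin{itemize}
\item It is a ring map: it kills $e_{ij}-e_{ji}$ and the Arnol'd relations.
\item In degree $n$ it extracts the coefficient of the NBC path monomial $e_{12}e_{23}\cdots e_{n,n+1}$. Topologically it is restriction to a planetary torus.
\end{itemize}
One has $\rho(c)=\sum_{1\le i<j\le n}\theta_i\theta_j$, and $\rho(c)^d=d!\sum_{|S|=2d}\theta_S$, where $\theta_S$ is the increasing product over $S\subseteq\{1,\dots,n\}$. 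From this:
\begin{itemize}
\item For $n$ even, $\rho(amc^{p-1})=(p-1)!\,\tfrac n2\,\theta_1\cdots\theta_n$.
\item For $n$ odd, $\rho(mc^{p-1})=(p-1)!\,\theta_1\cdots\theta_n$ and $\rho(ac^{p-1})=(p-1)\,\rho(mc^{p-1})$, so $(a+m)c^{p-1}\neq 0$. Also $\rho(amc^{p-2})\neq 0$.
\end{itemize}
Independence in each lower degree then follows by multiplying a putative relation by a suitable $c^j$, $ac^j$, $mc^j$ or $amc^j$ so that it lands on one of these nonzero products. For example, for $n$ odd, multiplying $k_1amc^{d-1}+k_2c^d=0$ by $mc^{p-1-d}$ kills the first term and gives $k_2=0$. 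This multiplication step, not $\partial$, is what propagates independence downward.
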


\begin{remark}\label{remark:LinearBasisG}
    The element $g$ defined in \Cref{def:BasisElements} does not appear in $B$.
    Indeed, by construction $\{am, c, g\}$ is linearly dependent.
    We define both $c$ and $g$ because $c$ will be easier to work with while considering the vector space structure of $\OS_n^{\symm_n}$ while $g$ will be useful in \Cref{sec:RingStructure} below to describe the ring theoretic structure.
    Let
    \[
    B' = \begin{cases}
    \{a^\epsilon\, m^\delta\, g^d : \epsilon, \delta \in \{0,1\} \text{ and } 0 \leq d < p \} & n~\text{even} \\
    \{a^\epsilon\, m^\delta\, g^d : \epsilon, \delta \in \{0,1\} \text{ and } 0 \leq d < p -1 \} \cup \{g^{p-1}, ag^{p-1} + mg^{p-1}\} & n~\text{odd},
    \end{cases}
    \]
    and $B'_k = \{b \in B' : \deg b = k\}$.
    It is straightforward to see that \Cref{prop:LinearBasisC} implies that $B' = \sqcup_{k=0}^n B'_k$ is also a graded basis for $\OS_n^{\symm_n}$.
\end{remark}

First we show linear independence in small degrees.

\begin{lemma}\label{lemma:Deg1-2LI}
    \begin{enumerate}
        \item\label{item:Deg1LI} For $n \geq 2$, $\{a,m\} = B_1$ is linearly independent.
        \item\label{item:Deg2LI} For $n \geq 3$, $\{am, c\} = B_2$ is linearly independent.
    \end{enumerate}
\end{lemma}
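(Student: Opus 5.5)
We work throughout in the NBC basis of $\OS_n$ for the lexicographic order on hyperplanes, as described via handfuls in \Cref{def:HandsFingers}. For part (\ref{item:Deg1LI}), note that $a$ and $m$ are already written as sums of distinct degree-one generators $e_{ij}$. Each generator is an NBC monomial, and the two supports are disjoint: $a$ uses only $e_{ij}$ with $j \le n$, while $m$ uses only $e_{i,n+1}$. Both are nonzero since $n \ge 2$. A relation $\lambda a + \mu m = 0$, expanded in the NBC basis, therefore forces $\lambda = \mu = 0$.

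For part (\ref{item:Deg2LI}) the plan is the same, but $am$ must first be rewritten in the NBC basis. We have
\[ am = \sum_{i<j\le n}\ \sum_{k \le n} e_{ij}e_{k,n+1}. \]
A product $e_{ij}e_{k,n+1}$ with $j \le n$ takes one finger from hand $H_j$ and one from hand $H_{n+1}$, so it is already NBC. Thus $am$ is a sum of distinct NBC monomials with coefficients $\pm1$, and since the factors anticommute we must keep track of signs consistently.

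By definition, $c$ is the sum of exactly those terms with $k \in \{i,j\}$. Hence $c$ is a sub-sum of $am$, with matching signs. Both $am$ and $c$ are therefore expanded in the NBC basis with no reduction needed.

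To conclude independence, it suffices to find an NBC monomial appearing in $am$ but not in $c$. When $n \ge 3$ we may choose $k \notin \{i,j\}$; for instance $e_{12}e_{3,n+1}$ works. This monomial has coefficient $1$ in $am$ and coefficient $0$ in $c$. Now suppose $\lambda\, am + \mu c = 0$. Reading off the coefficient of $e_{12}e_{3,n+1}$ gives $\lambda = 0$. Then $\mu c = 0$, and since $c \ne 0$ (it contains $e_{12}e_{1,n+1}$), we get $\mu = 0$.

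The only point requiring care is confirming that every product $e_{ij}e_{k,n+1}$ is genuinely NBC, so that the Orlik--Solomon relations never recombine terms. This holds because the two fingers always lie on different hands, $H_j$ with $j \le n$ and $H_{n+1}$. It is also where the hypothesis $n \ge 3$ enters: for $n = 2$ every $k$ lies in $\{i,j\} = \{1,2\}$, so $am = c$.
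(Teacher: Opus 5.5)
Your proof is correct and is essentially the paper's argument: both write $am$ and $c$ in the NBC basis, note that $c$ is exactly the sub-sum of $am$ with $k\in\{i,j\}$, use a term with $k\notin\{i,j\}$ to force the coefficient of $am$ to vanish, and then use $c\neq 0$ to force the other coefficient to vanish. You are also right that the degenerate case $am=c$ occurs at $n=2$; the paper's proof says ``$n=3$'' and ``$n\geq 4$'' at that point, which is off by one, and your argument covers $n=3$ as the statement requires.
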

\begin{proof}
    To see \Cref{item:Deg1LI}, observe that $a,m$ are nonempty sums over disjoint degree-one elements of the exterior algebra $\wedge \{e_{ij}\}$ and the Orlik--Solomon relations in $\OS_n$ are generated in degree two.

    When $n = 3$, note that $am = c$.
    When $n \geq 4$ (the setting of \Cref{item:Deg2LI}) first note that $am, c \neq 0$.
    Indeed
    \[am = \sum_{1 \leq i < j \leq n}\sum_{1 \leq k  \leq n} e_{ij}e_{k,n+1}.\]
    Each monomial $e_{ij}e_{k,n+1}$ is an NBC monomial, so the expression for $am$ above is a positive sum of basis elements of $\OS_{n,2}$, hence $am\neq 0$.
    Similarly, since 
    \[c = \sum_{1 \leq i < j \leq n} \left( e_{ij}e_{i,n+1} + e_{ij}e_{j,n+1}\right)\]
    is a positive sum of NBC monomials, $c \neq 0$.

    To see that $\{am, c\}$ is linearly independent, observe that if $n \geq 4$,
    \begin{align*}
        0 = k_1 am + k_2 c 
        &= k_1  \sum_{1 \leq i < j \leq n}\sum_{1 \leq k  \leq n} e_{ij}e_{k,n+1} \\
        &\quad + k_2 \sum_{1 \leq i < j \leq n} \left( e_{ij}e_{i,n+1} + e_{ij}e_{j,n+1}\right) \\
        &= (k_1 + k_2) \sum_{1 \leq i < j \leq n}\sum_{\substack{k \in \{i,j\}}} e_{ij}e_{k,n+1}\\
        &\quad + k_1 \sum_{1 \leq i < j \leq n}\sum_{\substack{k \not\in \{i,j\}}} e_{ij}e_{k,n+1}.
    \end{align*}
    Since the NBC monomials form a basis $\OS_{n,2}$, we see that $k_1 = 0$ and $k_1 + k_2 = 0$, hence $k_2 = 0$.
\end{proof}

The technical tool which we will use to proof \Cref{prop:LinearBasisC} is the well-known fact that the differential map $\partial$ defined in \Cref{def:DefnOfPartial} satisfies the Leibniz rule: $\partial(xy) = \partial(x) y + (-1)^{|x|} x \partial(y)$.
The following identities will allow us to use this fact to prove the linear independence of $B$.

\begin{lemma}\label{lem:PartialIdentities}

    \begin{enumerate}
        \item $\partial(a) = \binom{n}{2}$
        \item $\partial(m) = n$
        \item $\partial(am) = -na + \binom{n}{2}m$
        \item $\partial(c^d) = -2dac^{d-1} + d(n-1)mc^{d-1}$
        \item $\partial(amc^d) = -nac^d + \binom{n}{2}mc^d$
        \item $\partial(g) = \begin{cases}
            0 & n~\text{even}\\
            a - (p-1)m& n~\text{odd}
        \end{cases}$
    \end{enumerate}
\end{lemma}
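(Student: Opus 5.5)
The plan is to compute everything from two ingredients. The first is the value of $\partial$ on generators: $\partial(e_{ij}) = 1$ for every hyperplane, by \eqref{def:DefnOfPartial} with $d=1$. The second is the graded Leibniz rule $\partial(xy) = \partial(x)y + (-1)^{|x|}x\partial(y)$, which holds on $\OS_n$ because $\hat\partial$ descends to $\partial$. I will also use that $\OS_n$ is graded-commutative, being a quotient of an exterior algebra. Consequently every homogeneous element of odd degree squares to zero, and every element of even degree is central.

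For items (1) and (2), $a$ is a sum of $\binom{n}{2}$ generators and $m$ is a sum of $n$ generators, so $\partial(a)=\binom n2$ and $\partial(m)=n$. For item (3), Leibniz with $|a|=1$ gives $\partial(am)=\partial(a)m - a\partial(m)=\binom n2 m - na$.

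For item (4), I first compute $\partial(c)$ term by term. For a degree-two monomial, $\partial(e_{ij}e_{k,n+1}) = e_{k,n+1}-e_{ij}$. Summing over the two terms attached to a pair $i<j$ gives $e_{i,n+1}+e_{j,n+1}-2e_{ij}$. Now sum over all pairs. Each $e_{k,n+1}$ occurs once for every $j\neq k$, so $n-1$ times, and each $e_{ij}$ contributes $-2$. Hence $\partial(c)=(n-1)m-2a$. Since $c$ has even degree, the power rule \eqref{eqn:PartialPowerRule} applies and gives
\[\partial(c^d)=dc^{d-1}\bigl((n-1)m-2a\bigr).\]
Because $c$ is central, this can be rewritten in the stated form $-2dac^{d-1}+d(n-1)mc^{d-1}$.

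For item (5), apply Leibniz with $|am|=2$:
\[\partial(amc^d)=\partial(am)c^d + am\,\partial(c^d).\]
In the second summand, expanding $am\bigl((n-1)m-2a\bigr)c^{d-1}$ yields terms containing $m^2$ or $ama=-a^2m$. Both vanish, since $a$ and $m$ have odd degree. So only $\partial(am)c^d$ survives, and item (3) gives the claim.

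For item (6), note that $g=am-pc$, so item (3) and the formula for $\partial(c)$ give
\[\partial(g)=(2p-n)\,a+\Bigl(\tbinom n2-p(n-1)\Bigr)m.\]
If $n$ is even, then $p=n/2$ and both coefficients vanish. If $n$ is odd, then $p=(n+1)/2$, so the coefficient of $a$ is $1$ and the coefficient of $m$ is $(n-1)\bigl(\tfrac n2-\tfrac{n+1}{2}\bigr)=-\tfrac{n-1}{2}=-(p-1)$.

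There is no real obstacle here. The only points needing care are the bookkeeping in $\partial(c)$, specifically counting each $e_{k,n+1}$ exactly $n-1$ times, and remembering that the vanishing of $a^2$ and $m^2$ is what kills the extra terms in item (5).
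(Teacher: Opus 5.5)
Your proposal is correct and matches the paper's proof step for step: you use $\partial(e_{ij})=1$, the graded Leibniz rule, the power rule applied to $\partial(c)=(n-1)m-2a$, the vanishing of the extra terms in item (5), and the parity case split for $\partial(g)=(2p-n)a+\bigl(\binom n2-p(n-1)\bigr)m$. You also state explicitly two facts the paper uses silently: the power rule is valid here because $c$ has even degree, and the extra terms in item (5) die because $a^2=m^2=0$.
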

\begin{proof}
    Since $\partial(e_{ij})=1$, we see that $\partial(a)$ (respectively $\partial(m)$) is an integer counting the number of monomials in $a$ (respectively $m$) which is $\binom{n}{2}$ (respectively $n$).
    Then
    \begin{align*}
        \partial(am)
        &= \partial(a)m - a \partial(m)\\
        &= \binom{n}{2}m - na
    \end{align*}

    The Leibniz rule implies the power rule that $\partial(c^d) = dc^{d-1}\partial(c)$.
    Thus, we compute
    \begin{align*}
        \partial(c) &= \partial\left( \sum_{1 \leq i < j \leq n} e_{ij}e_{i,n+1} + e_{ij}e_{j,n+1}\right) \\
        &=\sum_{1 \leq i < j \leq n} e_{i,n+1} - e_{ij} + e_{j,n+1} - e_{ij}\\
        &=(n-1) m - 2a.
    \end{align*}
    So
    \[\partial(c^d) = dc^{d-1}\left((n-1)m - 2a\right).\]
    Next,
    \begin{align*}
        \partial(amc^d)
        &= \partial(am)c^d + am \partial(c^d)\\
        &= \left( -na + \binom{n}{2}m \right) c^d + am \left(-2dac^{d-1} + d(n-1)mc^{d-1}\right)\\
        &= \left( -na + \binom{n}{2} m \right) c^d.
    \end{align*}
    Lastly,
    \begin{align*}
        \partial(g)
        &= \partial(am - pc)\\
        &= (-na + \binom{n}{2}m) - p \partial(c^1)\\
        &= (-na + \binom{n}{2}m) - p \left(-2a + (n-1)m \right)\\
        &= \left(-n +2p\right) a + \left(\binom{n}{2} -(n-1)p\right) m.
    \end{align*}
    When $n$ is even, $p = \lfloor\frac{n+1}{2} \rfloor = \lfloor\frac{n}{2}\rfloor = \frac{n}{2}$.
    When $n$ is odd, $p = \frac{n+1}{2}$.
    Thus,
    \begin{align*}
        \partial(g)
        &=
        \begin{cases}
           \left (-n+ 2 \frac{n}{2} \right)a + \left( \frac{n(n-1)}{2} - (n-1)\frac{n}{2}\right)m & n~\text{even}\\
           \left (-n+ 2 \frac{n+1}{2} \right)a + \left( \frac{n(n-1)}{2} - (n-1)\frac{n+1}{2}\right)m  & n~\text{odd}
        \end{cases}\\
        &=
        \begin{cases}
            0 & n~\text{even}\\
            a - (p-1)m& n~\text{odd}
        \end{cases}
        \qedhere
    \end{align*}
\end{proof}

The following proposition relates linear independence in consecutive degrees. 

\begin{lemma} \label{lemma:ConsecutiveDegreeLI}
Suppose that $n \neq  3$.
    \begin{enumerate}
        \item  If $\{ac^d,\, mc^d\}$ are linearly independent 
        and $\dim \OS_{n,2d+2}^{\symm_n} \geq 2$, then $\{amc^d, c^{d+1}\}$ is linearly independent.
        \item \label{prop:EvenImpliesOddLI} If $\{amc^{d-1},\, c^d\}$ are linearly independent and $\dim \OS_{n,2d+1}^{\symm_n} \geq 2$, then $\{ac^d, mc^d\}$ is linearly independent.
    \end{enumerate}
\end{lemma}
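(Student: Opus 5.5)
The plan is to test a putative linear relation by applying the differential $\partial$ and using the formulas of \Cref{lem:PartialIdentities}. Two facts will be used along the way. First, $\partial$ commutes with the $\symm_n$-action, since it is defined by an $\symm_{n+1}$-equivariant formula. Second, we work over $\Q$, so averaging over $\symm_n$ gives a projection onto invariants. Hence the invariant subcomplex $(\invOS,\partial)$ is again acyclic by \Cref{lem:AcyclicPartial}.

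For part (1), suppose $k_1\, amc^d + k_2\, c^{d+1} = 0$ and apply $\partial$. By \Cref{lem:PartialIdentities} this gives
\[ \Bigl(-nk_1 - 2(d+1)k_2\Bigr) ac^d + \Bigl(\tbinom{n}{2}k_1 + (d+1)(n-1)k_2\Bigr) mc^d = 0. \]
Since $\{ac^d, mc^d\}$ is independent, both coefficients vanish. However, the $2\times 2$ determinant is $-n(n-1)(d+1) + 2(d+1)\binom{n}{2} = 0$, so $\partial$ alone does not finish the argument. It only forces $(k_1,k_2)$ to be proportional to $(2(d+1), -n)$.

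So the statement reduces to showing that the single element
\[ x := 2(d+1)\,amc^d - n\,c^{d+1}, \]
which is a cycle (for $n$ even it is essentially $c^d g$ up to scaling), is nonzero. Part (2) has the same structure. A relation $k_1 ac^d + k_2 mc^d = 0$ gives, after applying $\partial$ and using the Leibniz rule together with $ma = -am$,
\[ \Bigl(\tbinom{n}{2}k_1 + nk_2\Bigr)c^d - \Bigl(d(n-1)k_1 + 2d k_2\Bigr) amc^{d-1} = 0. \]
The determinant again vanishes, so $(k_1,k_2) \propto (2, -(n-1))$. It then remains to show that $w := 2ac^d - (n-1)mc^d = -\tfrac{1}{d+1}\partial(c^{d+1})$ is nonzero.

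To show these cycles are nonzero I will use the dimension hypothesis together with acyclicity of the invariant complex. By \Cref{prop:OSinvariantHilb} the invariant dimensions are $1,2,2,\ldots,2,1$. Exactness then forces the rank $r_k$ of $\partial$ out of degree $k$ to satisfy $r_k + r_{k+1} = \dim \OS^{\symm_n}_{n,k}$, so $r_k = 1$ for $1 \le k \le n$. In particular, in each degree $k$ with $1 \le k \le n-1$ the kernel of $\partial$ on invariants is exactly one-dimensional, and it equals the image from degree $k+1$.

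In part (1), the hypothesis that $\{ac^d, mc^d\}$ is independent makes $\partial(amc^d) = -nac^d + \binom n2 mc^d$ nonzero. Now suppose $x = 0$. Then $amc^d$ and $c^{d+1}$ are proportional, and both are nonzero. I would try to contradict this by multiplying by $m$: since $m^2 = 0$, we have $m x = -n\,mc^{d+1}$. This reduces $x \neq 0$ to $mc^{d+1} \neq 0$, which I would obtain from the one-dimensional kernel in the next odd degree $2d+3$ (which has dimension $\ge 2$ when $2d+3 \le n-1$), or from an NBC-coefficient check. Part (2) is handled analogously: $a\,w = -(n-1)\,amc^d$ and $m\,w = -2\,amc^d$, so $w \neq 0$ reduces to $amc^d \neq 0$. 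I would obtain that from $\partial(amc^d) = -nac^d + \binom n2 mc^d$, combined with the rank-one exactness in degree $2d+1$ and the hypothesis on $\{amc^{d-1}, c^d\}$.

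The main obstacle is exactly this step of showing the $\partial$-closed combination is nonzero, since the degeneracy of $\partial$ in the span is built into the problem. If the multiplication trick turns out to be circular, my fallback is a direct computation: expand $c^{d+1}$ and $amc^d$ in the NBC basis using the handful model of \Cref{def:HandsFingers}, and exhibit one NBC monomial, such as a product of $e_{ij}e_{j,n+1}$ factors on disjoint pairs, whose coefficient in $x$ (respectively $w$) is nonzero. The excluded case $n=3$ is where $am = c$, which is why it must be set aside.
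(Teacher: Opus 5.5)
Your $\partial$-computations are correct, and they show that the paper's own proof cannot work. The paper's displayed line writes $k_1(m-a)$ where $k_1\,\partial(am)=k_1(-na+\binom{n}{2}m)$ belongs, contradicting its own identity for $\partial(am)$. That slip is the only source of a nondegenerate $2\times 2$ system, and the only reason the hypothesis $n\neq 3$ appears. With the correct value the determinant vanishes for every $n$. So the lemma is exactly the statement that the cycles $x=2(d+1)amc^d-nc^{d+1}$ and $w=2ac^d-(n-1)mc^d$ are nonzero, and your proposal does not prove this.

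Your routes to nonvanishing do not work:
\begin{enumerate}
\item The multiplication trick only restates the problem. One checks $x=-\partial(mc^{d+1})$ and $w=-\frac{2}{n}\partial(amc^d)$. Also $\partial$ is injective on $m\,\OS_n$: if $\partial(my)=0$ then $ny=m\,\partial y$, so $my=0$. Hence $x\neq 0\iff mc^{d+1}\neq 0$ and $w\neq 0\iff amc^d\neq 0$. In particular, obtaining $amc^d\neq 0$ from the value of $\partial(amc^d)$ already assumes $w\neq 0$.
\item Exactness and the Hilbert series cannot decide the question. With $v=m/n$, the invariant ring is $Z\oplus vZ$ with $Z=\ker\partial$. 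Setting $\omega=c-\frac{2}{n}am$, which spans $Z_2$, one gets $x=-n\,\omega^{d+1}$. Now take an algebra $Z\otimes\Lambda[v]$ with the same Hilbert series and the same $\partial$-identities, but with $\omega^{d+1}=0$ and $Z_{2d+2}$ spanned by a new class. It satisfies the lemma's hypotheses and every identity you invoke, yet has $x=0$.
\item The NBC fallback fails as stated. For $d\ge 1$, every monomial of $c^{d+1}$ or $amc^d$ has at least two factors $e_{k,n+1}$, all fingers of the hand $H_{n+1}$. So none is NBC; for example, $e_{12}e_{2,n+1}e_{34}e_{4,n+1}$ is not. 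No coefficient can be read off without straightening by the Arnol'd relations.
\end{enumerate}

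What is missing is an argument that actually detects these classes. A ring map out of $\OS_n\cong H^\bullet(M_2(\A_n);\Q)$ does this. Let $k=d+1$ and build a torus $T^{2k}\to M_2(\A_n)$ as follows:
\begin{itemize}
\item take $k$ disjoint pairs of red points;
\item pair $s$ spins about its midpoint with angle $\phi_s$ and tiny radius;
\item that midpoint circles the blue point at radius $R_s$ with angle $\psi_s$, where $R_1\ll\cdots\ll R_k$;
\item the remaining red points are parked far away.
\end{itemize}
Reading off winding numbers, the induced map to $H^\bullet(T^{2k};\Q)$ sends:
\begin{itemize}
\item the edge inside pair $s$ to $\phi_s$;
\item $e_{i,n+1}$ to $\psi_s$ for $i$ in pair $s$;
\item $e_{ij}$ to $\psi_t$ for $i$ in pair $s$ and $j$ in pair $t>s$;
\item every other generator to $0$.
\end{itemize}
Thus $a\mapsto\sum_s\phi_s+4\sum_{s<t}\psi_t$, $m\mapsto 2\sum_s\psi_s$, and $c\mapsto 2\sum_s\phi_s\psi_s+4\sum_{s<t}\psi_t\psi_s$. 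Each $\phi_s$ must occur exactly once in a top-degree term. Hence $c^k\mapsto 2^k k!\,\Omega$ and $amc^{k-1}\mapsto 2^k k!\,\Omega$, where $\Omega=\phi_1\psi_1\cdots\phi_k\psi_k$.

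It follows that $x\mapsto(2k-n)2^k k!\,\Omega$. This is nonzero because $\dim\OS_{n,2d+2}^{\symm_n}\ge 2$ forces $2d+2\le n-1$. In part (2), the dimension hypothesis gives $2d+2\le n$, so $amc^d\neq 0$, and then $w\neq 0$ by your identity $mw=-2amc^d$. This argument uses neither the inductive hypotheses nor $n\neq 3$.

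Your closing remark about $n=3$ is also wrong. For $n=3$, $am-c=e_{12}e_{34}+e_{13}e_{24}+e_{23}e_{14}$, a sum of distinct NBC monomials, hence nonzero. The coincidence $am=c$ happens for $n=2$.
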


\begin{proof}
Suppose that $\{ac^d, mc^d\}$ are linearly independent.
Let $k_1, k_2 \in \Q$ so that $k_1 amc^d + k_2 c^{d+1} = 0$.
Then $\partial(k_1 amc^d + k_2 c^{d+1}) = 0$.
On the other hand, by applying the identities of \Cref{lem:PartialIdentities} we can express the same element in terms of $a,m,c$: 
\begin{align*}
    \partial(k_1 amc^d + k_2 c^{d+1})
    &= \left(k_1(m-a) + k_2((n-1)m - 2a) \right) c^d \\
    &\quad +  \left(k_1 am + k_2c \right) d c^{d-1}((n-1)m - 2a)\\
    &=  \left ( -k_1-2k_2-2dk_2\right ) ac^d + \left (k_1 +(n-1)k_2+ (n-1)dk_2 \right ) mc^d.
\end{align*}
Because $\{ac^d, m_c^d\}$ is linearly independent, we obtain the system of equations that
\[
\begin{cases}
    0 = -k_1 - 2k_2 - 2d k_2 \\
    0 = k_1 + (n-1)k_2 + (n-1)dk_2
\end{cases}
\]
which has the unique solution $k_1=k_2=0$ when $n \neq 3$.
The assumption that the ambient dimension is at least two is used implicitly; if $amc^d$ and $c^{d+1}$ were in a one-dimensional space, they must be linearly dependent.
The proof of \Cref{prop:EvenImpliesOddLI} is similar.
\end{proof}

The previous result relates linear independence in consecutive degrees.
After addressing the base case in the following lemma, we will be able to bootstrap the linear independence of the putative basis $B$.

The combination of \Cref{prop:OSinvariantHilb,lemma:ConsecutiveDegreeLI,lemma:Deg1-2LI} proves that the elements of $B$ of degree strictly less than $n$ form a basis for $\oplus_{d=0}^{n-1}\OS_{n,d}^{\symm_n}$.
To finish the poof of \Cref{prop:LinearBasisC} that $B$ is a basis, we need to show that the unique degree-$n$ element of $B$ is nonzero.
The nonvanishing in degree-$n$ follows from the following stronger statement.

\begin{lemma}\label{lemma:LIimpliesNonzero}
    \begin{enumerate}
        \item If $\{ac^d, mc^d\}$ is linearly independent, then $amc^d \neq 0$
        
        \item\label{item:LIImpliesNonzero} If $\{amc^{d-1}, c^{d}\}$ is linearly independent, then $ac^{d}  + mc^{d} \neq 0$.
    \end{enumerate}
\end{lemma}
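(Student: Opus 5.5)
The natural tool is the differential $\partial$ and the identities of \Cref{lem:PartialIdentities}. If an element $x$ satisfies $\partial(x) \neq 0$, then $x \neq 0$, since $\partial$ is linear and $\partial(0)=0$. So in each part I will compute $\partial$ of the element in question. I will then express the result as a linear combination of the pair assumed to be linearly independent, and check that the coefficients are not both zero.

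For part (1), \Cref{lem:PartialIdentities}(5) gives directly
\[
\partial(amc^d) = -n\,ac^d + \binom{n}{2} mc^d .
\]
Both coefficients $-n$ and $\binom{n}{2}$ are nonzero for $n\ge 2$. Since $\{ac^d, mc^d\}$ is linearly independent by hypothesis, this combination is nonzero. Hence $\partial(amc^d)\neq 0$, and so $amc^d\neq 0$.

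For part (2), I compute $\partial(ac^d + mc^d)$ with the Leibniz rule. Because $\partial(a)$ and $\partial(m)$ are scalars and $a,m$ have degree $1$, we get
\[
\partial(ac^d) = \binom{n}{2}c^d - a\,\partial(c^d), \qquad \partial(mc^d) = n\,c^d - m\,\partial(c^d).
\]
Next I substitute $\partial(c^d) = dc^{d-1}((n-1)m - 2a)$ and use $a^2 = m^2 = 0$ (they have degree one) and $ma = -am$. This gives
\[
a\,\partial(c^d) = d(n-1)\,amc^{d-1}, \qquad m\,\partial(c^d) = -2d\,mac^{d-1} = 2d\,amc^{d-1}.
\]
Adding the two expressions,
\[
\partial(ac^d + mc^d) = \left(\binom{n}{2} + n\right) c^d - d(n+1)\, amc^{d-1}.
\]
The coefficient of $c^d$ is $n(n+1)/2 > 0$. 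Since $\{amc^{d-1}, c^d\}$ is linearly independent, this combination is nonzero, and therefore $ac^d + mc^d \neq 0$.

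The main obstacle is only bookkeeping: the signs in the Leibniz rule and in the graded commutativity $ma = -am$, and the use of $a^2 = m^2 = 0$ to kill terms. The argument never needs the coefficient of $amc^{d-1}$ to be nonzero, because the coefficient of $c^d$ is already visibly positive. In the boundary case $d=0$ the $amc^{d-1}$ term is simply absent, and the argument still goes through since $1 \neq 0$.
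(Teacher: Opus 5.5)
Your proof is correct and is essentially the paper's argument: apply $\partial$, use \Cref{lem:PartialIdentities}, and invoke the assumed linear independence. The paper phrases this as a contradiction from assuming the element is zero and leaves part (2) as ``similar''; your explicit computation $\partial(ac^d+mc^d)=\frac{n(n+1)}{2}c^d-d(n+1)\,amc^{d-1}$ fills that in, and your part (1) coefficients $-n$ and $\binom{n}{2}$ are the ones \Cref{lem:PartialIdentities}(5) actually gives, whereas the paper's proof writes $(m-a)c^d$.
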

\begin{proof}
    Suppose for the sake of contradiction that $\{ac^d, mc^d\}$ is linearly independent and $amc^d = 0$.
    If this were true then by \Cref{lem:PartialIdentities}, we would have
    \[0 = \partial(0) = \partial(amc^d) = (m-a)c^d,\]
    contradicting the linear independence of $ac^d$ and $mc^d$.
    The proof of \Cref{item:LIImpliesNonzero} is similar.
\end{proof}

We are finally ready to prove \Cref{prop:LinearBasisC}, that $B$ is a linear basis for $\OS_n^{\symm_n}$.

\begin{proof}[Proof~(of \Cref{prop:LinearBasisC})]
    In degree-$0$, $\OS_{n,0} = \OS_{n,0}^{\symm_n} \cong \Q$, so $a^0m^0c^0 = 1$ is the basis element of degree-$0$.
    By \Cref{prop:OSinvariantHilb}, we see that $\dim \OS_{n,d}^{\symm_n} = 2$ for $1 \leq d \leq n-1$.
    By \Cref{lemma:Deg1-2LI} we have produced two linearly independent elements in each of $\OS_{n,1}^{\symm_n}$ and $\OS_{n,2}^{\symm_n}$.
    By induction, \Cref{lemma:ConsecutiveDegreeLI} implies that $B_{<n}=\sqcup_{k=0}^{n-1} B_{k}$ is a basis for $\oplus_{d=0}^{n-1} \OS_{n,d}^{\symm_n}$.
    By \Cref{prop:OSinvariantHilb}, we see that $\dim \OS_{n,n}^{\symm_n} = 1$.
    \Cref{lemma:LIimpliesNonzero} shows that the unique degree-$n$ element of $B$ is nonzero, hence a basis for $\OS_{n,n}^{\symm_n}$.
\end{proof}

\section{Generators and relations of invariants in the Orlik--Solomon algebra}\label{sec:RingStructure}

This section is dedicated to proving \Cref{thm:OSnPresentation}.
Recall the two sided ideal $I_n$ defined in \Cref{def:Ideal}.
We do this by constructing an explicit isomorphism between $\Q\{\alpha, \mu, \gamma \}/I_n$ and $\OS_n^{\symm_n}$

\begin{definition}\label{def:IsomorphismMap}
    Let $a,m,g \in \OS_n$ be defined as in \Cref{def:BasisElements}.
    Let $p := \lfloor \frac{n+1}{2}\rfloor$ as in \Cref{thm:OSnPresentation}.
    Define the algebra homomorphism $\varphi: \Q\{\alpha,\mu,\gamma \} \to \OS_n$ by
    \[\varphi(\alpha) = a, \quad \varphi(\mu) = m, \quad \text{and} \quad \varphi(\gamma) = g.\]
\end{definition}

\begin{proposition}\label{prop:ExplicitIsomOS}
    $\varphi$ descends to an isomorphism $\Q\{\alpha,\mu,\gamma \}/I_n \to \OS_n^{\symm_n}$
\end{proposition}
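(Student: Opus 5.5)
The plan is to show that $\varphi$ lands in $\OS_n^{\symm_n}$, that $I_n \subseteq \ker\varphi$, that the induced map is surjective, and finally to compare dimensions. First, $a, m, g \in \OS_n^{\symm_n}$ by the remark following \Cref{def:BasisElements}, and $\OS_n^{\symm_n}$ is a subring. The degrees match: $|a|=|m|=1$ and $|g|=2$. Since $\OS_n$ is graded-commutative, $\varphi$ is a well-defined homomorphism of graded algebras $\Q\{\alpha,\mu,\gamma\} \to \OS_n^{\symm_n}$.

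Next we check that $I_n \subseteq \ker \varphi$; I expect this to be the main step, and the tool is the differential $\partial$.
\begin{itemize}
\item Case $n$ even. Here $p = n/2$, so $g^p$ has degree $n$. By \Cref{eqn:PartialPowerRule} and \Cref{lem:PartialIdentities}, $\partial(g^p) = p\, g^{p-1}\partial(g) = 0$. Since $\partial$ is injective on $(\OS_n)_n$ by \Cref{lem:AcyclicPartial}, we get $g^p = 0$.
\item Case $n$ odd. Here $g^p$ has degree $n+1 > n$, so $g^p = 0$ automatically. Then, again by the power rule and \Cref{lem:PartialIdentities},
\[0 = \partial(g^p) = p\, g^{p-1}\partial(g) = p\,\bigl(a g^{p-1} - (p-1) m g^{p-1}\bigr).\]
This is $p$ times $\varphi(\alpha\gamma^{p-1} - (p-1)\mu\gamma^{p-1})$, using that $g$ has even degree and so is central.
\end{itemize}
Hence $\varphi$ descends to a map $\bar\varphi$ on $\Q\{\alpha,\mu,\gamma\}/I_n$.

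For surjectivity, use the basis $B'$ of \Cref{remark:LinearBasisG}, which follows from \Cref{prop:LinearBasisC}. Every element of $B'$ is visibly $\varphi$ of a monomial or a sum of two monomials. For example, $ag^{p-1} + mg^{p-1} = \varphi(\alpha\gamma^{p-1} + \mu\gamma^{p-1})$. So $\bar\varphi$ is onto, and its target has dimension $2n$ by \Cref{ex:TotalDims}.

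It remains to show $\dim \Q\{\alpha,\mu,\gamma\}/I_n \leq 2n$; surjectivity then forces $\bar\varphi$ to be an isomorphism. The quotient is spanned by the images of $\alpha^\epsilon\mu^\delta\gamma^d$ with $\epsilon,\delta\in\{0,1\}$.
\begin{itemize}
\item For $n$ even, $\gamma^p = 0$ leaves the monomials with $d < p$, which number $4p = 2n$.
\item For $n$ odd, the monomials with $d < p-1$ number $4(p-1) = 2n-2$. In degree $d = p-1$, the relation gives $\alpha\gamma^{p-1} = (p-1)\mu\gamma^{p-1}$. Multiplying this relation by $\mu$ gives $\mu\alpha\gamma^{p-1} = (p-1)\mu^2\gamma^{p-1} = 0$, so $\alpha\mu\gamma^{p-1} = 0$. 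This leaves only $\gamma^{p-1}$ and $\mu\gamma^{p-1}$, for a total of $2n$.
\end{itemize}
In either case the quotient has dimension at most $2n$, completing the proof.
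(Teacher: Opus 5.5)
Your proof is correct. It follows the same outline as the paper: surjectivity from the basis $B'$ of \Cref{remark:LinearBasisG}, then showing the generators of $I_n$ lie in $\ker\varphi$ via $\partial$, then a size comparison. Your derivations of the two relations differ from the paper's, however.

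For $n$ even, the paper proves $g^p=0$ by contradiction. It writes $kg^p=amg^{p-1}$ using $\dim\OS_{n,n}^{\symm_n}=1$, applies $\partial$, and appeals to the linear independence of $ag^{p-1}$ and $mg^{p-1}$ supplied by the basis. You observe directly that $\partial(g^p)=p\,g^{p-1}\partial(g)=0$ and then use injectivity of $\partial$ on $(\OS_n)_n$. This is shorter, and it means you need \Cref{prop:LinearBasisC} only for surjectivity.

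For $n$ odd, the paper computes $\partial(ag^{p-1})$ and $(p-1)\partial(mg^{p-1})$ separately and invokes injectivity in degree $n$. You instead differentiate the vanishing of $g^p$, which holds because $g^p$ lies in degree $n+1$. This yields $p\,g^{p-1}\bigl(a-(p-1)m\bigr)=0$ with no injectivity needed.

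Finally, the paper simply asserts that $\ker\varphi$ is ``no larger'' than $I_n$. Your explicit count of a spanning set of $\Q\{\alpha,\mu,\gamma\}/I_n$ makes this step rigorous, in particular the observation that
$\alpha\mu\gamma^{p-1}=-\mu\bigl(\alpha\gamma^{p-1}-(p-1)\mu\gamma^{p-1}\bigr)\in I_n$ for $n$ odd, which the paper leaves implicit.
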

In \Cref{lemma:ImVarPhi} we will show that $\OS_n^{\symm_n} = \img \varphi$.
In \Cref{lemma:KerVarPhi} we will show that $I_n = \ker \varphi$.
\Cref{prop:ExplicitIsomOS} then follows from a straightforward application of Noether's First Isomorphism Theorem that $\img \varphi \cong \Q\{\alpha, \mu, \gamma\}/\ker \varphi.$

\begin{lemma}\label{lemma:ImVarPhi}
    $\varphi$ surjects onto $\OS_n^{\symm_n}$.
\end{lemma}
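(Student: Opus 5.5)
The plan is to show that the image of $\varphi$ lies in $\OS_n^{\symm_n}$ and contains the basis $B'$ of \Cref{remark:LinearBasisG}.

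First, the image is contained in the invariants. The elements $a,m,c$ are $\symm_n$ orbit sums, and $g=am-pc$ is a linear combination of products of invariants. The $\symm_n$-action on $\OS_n$ is by graded algebra automorphisms, so $\OS_n^{\symm_n}$ is a subalgebra. It contains $\varphi(\alpha)=a$, $\varphi(\mu)=m$ and $\varphi(\gamma)=g$. Since $\varphi$ is an algebra homomorphism, $\img\varphi \subseteq \OS_n^{\symm_n}$.

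Second, $\varphi$ hits every element of $B'$. For $n$ even, each basis element $a^\epsilon m^\delta g^d$ equals $\varphi(\alpha^\epsilon\mu^\delta\gamma^d)$. For $n$ odd, the additional elements are $g^{p-1}=\varphi(\gamma^{p-1})$ and $ag^{p-1}+mg^{p-1}=\varphi\bigl((\alpha+\mu)\gamma^{p-1}\bigr)$. The image is a subspace, so it contains $\operatorname{span}B'$. By \Cref{remark:LinearBasisG} this span is all of $\OS_n^{\symm_n}$, which gives surjectivity.

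The only step with real content is the deduction in \Cref{remark:LinearBasisG} that $B'$ is a basis, and I would supply that argument here. Since $c=(am-g)/p$, one has $c^d=p^{-d}(am-g)^d$. Expand this using graded commutativity: $a$ and $m$ have degree one, so $a^2=m^2=0$ and $(am)^2=0$, while $am$ has even degree and commutes with $g$. This gives
\[c^d = p^{-d}\bigl((-1)^d g^d + (-1)^{d-1} d\, am\, g^{d-1}\bigr).\]
Multiplying by $a$ or $m$ kills the $am$ term, so $a^\epsilon m^\delta c^d$ lies in $\operatorname{span}B'$ whenever $(\epsilon,\delta)\neq(0,0)$. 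When $\epsilon=\delta=0$, the element $c^d$ lies in the span of $g^d$ and $am\,g^{d-1}$; both of these are in $B'$ when $d\le p-1$, with the odd-$n$ case $d=p-1$ also fine. For the odd top element,
\[ac^{p-1}+mc^{p-1}=\pm p^{-(p-1)}\bigl(ag^{p-1}+mg^{p-1}\bigr).\]
So every element of $B$ lies in $\operatorname{span}B'$. Hence $\operatorname{span}B'=\OS_n^{\symm_n}$ by \Cref{prop:LinearBasisC}, which is all that surjectivity requires.
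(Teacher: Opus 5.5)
Your proposal is correct and follows the paper's argument: the image of $\varphi$ contains every element of the basis $B'$ of \Cref{remark:LinearBasisG} (using $ag^{p-1}+mg^{p-1}=\varphi\bigl((\alpha+\mu)\gamma^{p-1}\bigr)$ when $n$ is odd), hence all of $\OS_n^{\symm_n}$. Your extra steps fill in details the paper leaves implicit or calls straightforward: checking that the image of $\varphi$ lies in $\OS_n^{\symm_n}$, and expanding $c^d=p^{-d}(am-g)^d$ to show that the span of $B$ lies in the span of $B'$. As in the paper, spanning ultimately rests on \Cref{prop:LinearBasisC}.
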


\begin{proof}
    We see that $\varphi(\alpha^\epsilon \mu^\delta \gamma^d) = a^\epsilon m^\delta g^d$.
    Comparing this to \Cref{remark:LinearBasisG} we see that every basis element of $\OS_n^{\symm_n}$ is in the image of $\varphi$, hence $\varphi$ is surjective.
\end{proof}

We now begin to analyze the kernel of $\varphi$.
Recall that $\OS_{n,n+1} = 0$ for all $n$.
Recall from \Cref{lem:AcyclicPartial} that $\OS_n$ forms an acyclic chain complex with respect to $\partial$.
In particular, this tells us that 
$\partial: \OS_{n,n} \to \OS_{n,n-1}$ is injective.

\begin{lemma} \label{lemma:PowerofgIs0}
    Let $g$ and $p$ be as in \Cref{def:IsomorphismMap}. Then $g^p = 0$.
\end{lemma}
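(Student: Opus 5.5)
The plan is to split on the parity of $n$ and to use the degree bound in one case and the acyclicity of $\partial$ (\Cref{lem:AcyclicPartial}) in the other. The element $g = am - pc$ is homogeneous of degree $2$, so $g^p$ is homogeneous of degree $2p$.

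\emph{Case $n$ odd.} Here $p = \frac{n+1}{2}$, so $g^p$ lies in $\OS_{n,n+1}$. Since $\OS_n$ vanishes in degrees above $n$, we get $g^p = 0$ immediately.

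\emph{Case $n$ even.} Here $p = \frac{n}{2}$, so $g^p$ lies in the top degree $\OS_{n,n}$. By \Cref{lem:AcyclicPartial}, the map $\partial\colon \OS_{n,n} \to \OS_{n,n-1}$ is injective, so it suffices to show $\partial(g^p) = 0$.

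Because $g$ has even degree, it commutes with itself and with everything else. The Leibniz rule therefore gives the power rule \eqref{eqn:PartialPowerRule} with no sign issues:
\[
\partial(g^p) = p\, g^{p-1}\,\partial(g).
\]
By \Cref{lem:PartialIdentities}, $\partial(g) = 0$ when $n$ is even. Hence $\partial(g^p) = 0$, and injectivity forces $g^p = 0$.

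There is no real obstacle. The only points to check are that the power rule applies, which holds because $g$ has even degree, and that $2p$ equals $n$ exactly when $n$ is even, so that the injectivity statement for the top degree is the one available.
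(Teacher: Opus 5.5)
Your proof is correct. The odd case is the same as the paper's, but for $n$ even you take a more direct route. You compute $\partial(g^p)=p\,g^{p-1}\partial(g)=0$ and conclude $g^p=0$ from injectivity of $\partial$ on $\OS_{n,n}$.

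The paper instead argues by contradiction. Assuming $g^p\neq 0$, it uses $\dim\OS_{n,n}^{\symm_n}=1$ (\Cref{prop:OSinvariantHilb}) to write $kg^p=amg^{p-1}$ with $k\neq 0$, and then applies $\partial$. Using $\partial(g)=0$, both the $g^p$ term and the $am\,\partial(g^{p-1})$ term vanish. What remains is $-\partial(am)\,g^{p-1}$, a nontrivial combination of $ag^{p-1}$ and $mg^{p-1}$. This is nonzero by the linear independence supplied by the basis $B'$ of \Cref{remark:LinearBasisG}.

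The first step of that computation is exactly your observation $\partial(g^p)=0$. Together with injectivity, that already finishes the proof, so the detour through $amg^{p-1}$ is not needed.

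Your version uses only three inputs: $\partial(g)=0$, the power rule for an even-degree element, and the general injectivity of $\partial$ in top degree. This makes the lemma independent of the Hilbert series and the linear basis. Those results are still needed in \Cref{lemma:KerVarPhi} to show $\ker\varphi$ is no larger than $I_n$.

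The paper's argument cites injectivity but never actually uses it, since the zero element trivially has zero boundary. Its real inputs are the dimension count and the linear independence in degree $n-1$. That trade is no real saving, because injectivity is used anyway in \Cref{lemma:LinearDependenceInTopDegree}.
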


\begin{proof}
    When $n$ is odd, $2p = n+1$. Thus $g^p \in \OS_{n,n+1}=0$.

    When $n \geq 2$ is even, $g^p \in \OS_{n,n}$. Suppose for the sake of contradiction that $g^p \neq 0$.
    Then since $\dim \OS_{n,n} = 1$ by \Cref{prop:OSinvariantHilb}, we could write $kg^p = amg^{p-1}$ for some $k \neq 0$.
    If this were true, then by injectivity of $\partial: \OS_{n,n}\to\OS_{n,n-1}$ we would see that
    $\partial(kg^p - amg^{p-1}) = 0$.
    Thus we compute
    \begin{align*}
        \partial(kg^p - amg^{p-1})
        &= k p g^{p-1} \partial(g) - \left(\partial(am)g^{p-1} + am \partial(g^{p-1}) \right)\\
        &= 0 - \partial(am)g^{p-1} - am(p-1)g^{p-1} \partial(g)& (\text{by \Cref{lem:PartialIdentities}})\\
        &= nag^{p-1} + \binom{n}{2}mg^{p-1}. 
    \end{align*}
    But we know from \Cref{remark:LinearBasisG} that $ag^{p-1}$ and $mg^{p-1}$ are linearly independent, and so to have $\partial(kg^p - amg^{p-1})=0$ it must be that $n = \binom{n}{2} = 0$ contradicting that $n \geq 2$.
\end{proof}

\begin{lemma} \label{lemma:LinearDependenceInTopDegree}
    Let $g$ and $p$ be as in \Cref{def:IsomorphismMap}. Furthermore, let $n$ be odd.
    Then
    \begin{equation}\label{eqn:LinearDependenceInTopDegree}
        a g^{p-1} = (p-1) m g^{p-1}
    \end{equation}
\end{lemma}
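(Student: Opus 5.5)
The plan is to differentiate the vanishing identity $g^p = 0$ and read off the desired linear relation from the Leibniz rule. The only inputs are \Cref{lemma:PowerofgIs0}, the power rule \eqref{eqn:PartialPowerRule}, and the odd case of the formula for $\partial(g)$ in \Cref{lem:PartialIdentities}.

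First I record the degrees. Since $n$ is odd, $p = \frac{n+1}{2}$, so $2p = n+1$. By \Cref{lemma:PowerofgIs0} (or directly, because $\OS_{n,n+1} = 0$), we have $g^p = 0$ in $\OS_n$. The map $\partial$ is well defined on $\OS_n$ and is linear, so $\partial(g^p) = \partial(0) = 0$.

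Next I expand $\partial(g^p)$ in a second way using the power rule \eqref{eqn:PartialPowerRule}:
\[
\partial(g^p) = p\, g^{p-1}\,\partial(g).
\]
The power rule needs no sign correction here, since $g$ has even degree $2$. By \Cref{lem:PartialIdentities}, for $n$ odd we have $\partial(g) = a - (p-1)m$. Hence
\[
0 = p\, g^{p-1}\bigl(a - (p-1)m\bigr).
\]
Because $g^{p-1}$ has even degree, it commutes with $a$ and $m$ in the graded-commutative algebra $\OS_n$. Also $p \geq 2 > 0$ when $n \geq 3$ is odd, so we may divide by $p$ over $\Q$. This gives $a g^{p-1} = (p-1) m g^{p-1}$, which is \eqref{eqn:LinearDependenceInTopDegree}.

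The only step needing care is justifying the power rule for $g$. This amounts to checking that the Leibniz rule $\partial(xy) = \partial(x)y + (-1)^{|x|}x\partial(y)$ holds on $\OS_n$. The paper has already asserted this, so I would simply invoke it. An induction on $d$ then gives $\partial(g^d) = d\,g^{d-1}\partial(g)$, since $g$ has even degree. I expect no real obstacle: the whole argument rests on the observation that $g^p$ lands in the zero group $\OS_{n,n+1}$ exactly when $n$ is odd.
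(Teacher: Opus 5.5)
Your proof is correct, and it takes a different and shorter route than the paper. You differentiate $g^p$, which vanishes for the trivial reason that it lies in $\OS_{n,n+1}=0$ when $2p=n+1$. The power rule then gives $0=p\,g^{p-1}\bigl(a-(p-1)m\bigr)$, which is exactly the desired relation in degree $n$.

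The paper instead works entirely in degree $n$. It expands $\partial(ag^{p-1})$ and $(p-1)\partial(mg^{p-1})$ separately by the Leibniz rule, and uses $a^2=m^2=0$ and $\binom{n}{2}=(p-1)n$ to see that they agree. It then invokes injectivity of $\partial\colon\OS_{n,n}\to\OS_{n,n-1}$, which comes from the acyclicity in \Cref{lem:AcyclicPartial}.

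Your argument needs only that $\partial$ descends to a derivation of $\OS_n$ and that $\OS_n$ vanishes above degree $n$. It therefore avoids the acyclicity input, and also the sign bookkeeping of two Leibniz expansions. In the paper's expansion of $(p-1)\partial(mg^{p-1})$, the term $m(p-1)g^{p-2}\partial(g)$ should in fact carry a minus sign, though the conclusion is unaffected.

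Your route also explains the shape of the odd-case generator of $I_n$: its image under $\varphi$ is $\frac{1}{p}\,\partial(g^p)=\frac{1}{p}\,\partial\bigl(\varphi(\gamma^p)\bigr)$, the ``derivative'' of the other generator.

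What the paper's template buys is uniformity. Comparing images under $\partial$ and using injectivity in top degree is what is genuinely needed in the even case of \Cref{lemma:PowerofgIs0}. There $g^p$ lands in degree $n$ rather than $n+1$, so there is no trivially vanishing element to differentiate.
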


\begin{proof}
    We compute
    \begin{align}
        \partial(ag^{p-1})
        &= 
        \partial(a)g^{p-1} - a \partial(g^{p-1}) \nonumber \\
        &=
        \binom{n}{2}g^{p-1} - (p-1)ag^{p-2}\partial(g)\nonumber \\
        &=
        (p-1)ng^{p-1} + (p-1)^2amg^{p-2} \nonumber \\
        &=
        (p-1)\left( \partial(m) g^{p-1} + m (p-1) g^{p-2}\partial(g) \right)\nonumber \\
        &= (p-1)\partial(mg^{p-1}). \label{eqn:PartialIdentityPenultimateDegree}
    \end{align}
    Since $\partial: \OS_{n,n} \to \OS_{n,n-1}$ is injective, \Cref{eqn:PartialIdentityPenultimateDegree} implies \Cref{eqn:LinearDependenceInTopDegree}.
\end{proof}

\begin{lemma}\label{lemma:KerVarPhi}
    \[\ker \varphi =
    \begin{cases} \langle \gamma^p \rangle &\text{if $n$ is even}\\
    \langle \gamma^p,\; \alpha \gamma^{p-1} - (p-1)\mu \gamma^{p-1}\rangle &\text{if $n$ is odd.}\\
    \end{cases}\]
\end{lemma}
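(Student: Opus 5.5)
We prove the two inclusions $I_n \subseteq \ker\varphi$ and $\ker\varphi \subseteq I_n$ separately. The second inclusion will follow from a dimension count.

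\emph{The inclusion $I_n \subseteq \ker\varphi$.} Since $\varphi$ is an algebra homomorphism, it suffices to check that the generators of $I_n$ map to zero. First, $\varphi(\gamma^p) = g^p = 0$ by \Cref{lemma:PowerofgIs0}. Second, when $n$ is odd,
\[\varphi(\alpha\gamma^{p-1} - (p-1)\mu\gamma^{p-1}) = ag^{p-1} - (p-1)mg^{p-1} = 0\]
by \Cref{lemma:LinearDependenceInTopDegree}. So $\varphi$ factors through a surjection $\bar\varphi: \Q\{\alpha,\mu,\gamma\}/I_n \to \OS_n^{\symm_n}$, where surjectivity comes from \Cref{lemma:ImVarPhi}.

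\emph{The reverse inclusion.} It is enough to show that $\dim_\Q \Q\{\alpha,\mu,\gamma\}/I_n \le 2n$. Indeed, $\dim \OS_n^{\symm_n} = 2n$ by \Cref{ex:TotalDims}, and a surjection between finite-dimensional spaces whose source has dimension at most that of the target is an isomorphism. In that case $\ker\bar\varphi = 0$, i.e.\ $\ker\varphi = I_n$.

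To bound the dimension, use the spanning set of monomials $\alpha^\epsilon\mu^\delta\gamma^d$ of $\Q\{\alpha,\mu,\gamma\}$. Because $\gamma$ is central, the ideal generated by $\gamma^p$ is spanned by the monomials with $d \ge p$. Hence modulo $\gamma^p$ the quotient is spanned by the $4p$ monomials with $\epsilon,\delta \in \{0,1\}$ and $0 \le d < p$.

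\begin{itemize}
\item When $n$ is even, $4p = 2n$, and we are done.
\item When $n$ is odd, $4p = 2n+2$. We use the second generator $r = \alpha\gamma^{p-1} - (p-1)\mu\gamma^{p-1}$ to remove two further monomials.
\begin{itemize}
\item Modulo $I_n$ we have $\alpha\gamma^{p-1} \equiv (p-1)\mu\gamma^{p-1}$, so $\alpha\gamma^{p-1}$ is redundant.
\item Multiplying $r$ on the left by $\mu$ gives $\mu r = \mu\alpha\gamma^{p-1}$, using $\mu^2 = 0$. So $\alpha\mu\gamma^{p-1} = -\mu\alpha\gamma^{p-1} \in I_n$, and $\alpha\mu\gamma^{p-1}$ is redundant as well.
\end{itemize}
This leaves at most $4p - 2 = 2n$ spanning monomials, as required.
\end{itemize}

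\emph{Main obstacle.} The only delicate point is the odd case. There one must remember that $I_n$ is a two-sided ideal, so it contains products such as $\mu r$, and one must track the signs of graded commutativity correctly. An alternative for the top degree is to note that the degree-$(n+1)$ part of $\Q\{\alpha,\mu,\gamma\}$ already lies in $\langle\gamma^p\rangle$ together with $\alpha\mu\gamma^{p-1}$. Either way, the counting argument suffices, and no explicit description of $\ker\varphi$ beyond these generators is needed.
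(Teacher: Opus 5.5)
Your proof is correct and is essentially the paper's argument: the inclusion $I_n\subseteq\ker\varphi$ comes from \Cref{lemma:PowerofgIs0} and \Cref{lemma:LinearDependenceInTopDegree}, and the reverse inclusion rests on the $2n$-element basis of $\OS_n^{\symm_n}$ from \Cref{prop:LinearBasisC}. The only differences are that you use this basis through surjectivity (\Cref{lemma:ImVarPhi}) plus a dimension count, whereas the paper appeals directly to linear independence of the images $a^\epsilon m^\delta g^d$, and that you explicitly check the quotient is spanned by $2n$ monomials, including the step $\alpha\mu\gamma^{p-1}=-\mu r\in I_n$ for $n$ odd, which the paper's one-line argument leaves implicit.
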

\begin{proof}
    \Cref{lemma:PowerofgIs0} shows that $\langle \gamma^p\rangle \subseteq \ker \varphi$ and \Cref{lemma:LinearDependenceInTopDegree} shows that when $n$ is odd $\langle \alpha \gamma^{p-1} - (p-1)\mu \gamma^{p-1}\rangle \subseteq \ker \varphi.$
    Since $\alpha^{\epsilon} \mu^\delta \gamma^d$ where $\epsilon, \delta \in \{0,1\}$ and $d \in \mathbb{N}$ is a basis for $\Q\{\alpha,\mu,\gamma\}$ and $\varphi(\alpha^{\epsilon} \mu^\delta \gamma^d) = a^{\epsilon} g^\delta g^d$ it follows from \Cref{prop:LinearBasisC} that $\ker \varphi$ is no larger.
\end{proof}

Having constructed an explicit isomorphism $\varphi$ between $\Q\{\alpha,\mu,\gamma\}/I_n$ and $\OS_n^{\symm_n}$, we have proven the desired presentation contained in \Cref{thm:OSnPresentation}.

\section*{Acknowledgments}

The author is grateful to Vic Reiner, Sheila Sundaram, Anh Hoang Nam Tran, and Craig Westerland for helpful conversations regarding this work.
The author is especially grateful to Reiner for sharing this question with him and to Sundaram for simplifying the proof of the Hilbert series in \Cref{prop:OSinvariantHilb}.
The author is also grateful to Galen Dorpalen--Barry for helpful feedback on an earlier draft of this paper.
The author was partially supported by NSF grant DMS-2053288.
This work was also partially completed while in residence at the ICERM special semester on Categorification and Computation in Algebraic Combinatorics.
\bibliographystyle{siam}
\bibliography{main}

\end{document}